\let\chooseClass2   
\newlength{\Totalheight}
\newtheoremstyle{boldhead}
{\topsep}
{\topsep}
{\slshape}
{}
{\bfseries}
{.}
{ }
{\thmname{#1}\thmnumber{ #2}\thmnote{ (#3)}}
\newtheoremstyle{boldremark}
{\topsep}
{\topsep}
{\upshape}
{}
{\bfseries}
{.}
{ }
{\thmname{#1}\thmnumber{ #2}\thmnote{ (#3)}}
\theoremstyle{boldhead}
\newtheorem{theorem}[subsection]{Theorem}
\newtheorem{lemma}[subsection]{Lemma}
\newtheorem{proposition}[subsection]{Proposition}
\theoremstyle{boldremark}
\newtheorem{definition}[subsection]{Definition}
\newtheorem{remark}[subsection]{Remark}
\numberwithin{equation}{section}
\newcommand{\ca}{{\mathcal A}}
\newcommand{\cb}{{\mathcal B}}
\newcommand{\cc}{{\mathcal C}}
\newcommand{\cd}{{\mathcal D}}
\newcommand{\ce}{{\mathcal E}}
\newcommand{\cn}{{\mathcal N}}
\newcommand{\cp}{{\mathcal P}}
\newcommand{\cq}{{\mathcal Q}}
\newcommand\cQuiver{{\mathscr Q}}
\let\eps\varepsilon
\let\epsilon\varepsilon
\let\ge\geqslant
\let\kk\Bbbk
\let\le\leqslant
\let\rto\xrightarrow
\let\tens\otimes
\let\wt\widetilde
\newcommand{\bj}{{\mathbf j}}
\newcommand{\bull}{{\scriptscriptstyle\bullet}}
\newcommand{\McD}{{\mathscr D}}
\newcommand{\ucom}{\underline{\mathsf C}_\kk}
\newcommand{\uCom}{{\underline{\mathsf C}_\kk}}
\newcommand{\su}{{\mathsf{su}}}
\newcommand{\tdt}{\otimes\dots\otimes}
\newcommand{\uni}{{\mathbf i}}
\newcommand\ZZ{{\mathbb Z}}
\newcommand{\sS}[2]{\vphantom{#2}#1 #2}
\newcommand{\n}[1]{\nobreakdash-\hspace{0pt}}
\newcommand{\ainf}[1]{$A_\infty$\nobreakdash-\hspace{0pt}}
\DeclareMathOperator\Cone{Cone}
\DeclareMathOperator\id{id}
\DeclareMathOperator\inj{in}
\DeclareMathOperator\Ob{Ob}
\DeclareMathOperator\pr{pr}
\newcommand{\defeq}{\overset{\textup{def}}{=}}
\newcommand{\defref}[1]{Definition~\ref{#1}}
\newcommand{\lemref}[1]{Lemma~\ref{#1}}
\newcommand{\propref}[1]{Proposition~\ref{#1}}
\newcommand{\secref}[1]{Section~\ref{#1}}
\newcommand{\thmref}[1]{Theorem~\ref{#1}}
\begin{document}

\title{Unital $A_\infty$-categories}
\author{Volodymyr Lyubashenko and Oleksandr Manzyuk}

\keywords{\ainf-category, unital \ainf-category, weak unit}

\begin{abstract}
    \ifx\chooseClass1
Ми доводимо, що три означення унітальності для \ainf-категорій
запропоновані Любашенком, Концевичем і Сойбельманом, та Фукая є
еквівалентними.

\bigskip
    \fi

We prove that three definitions of unitality for \ainf-categories
suggested by the first author, by Kontsevich and Soibelman, and by Fukaya
are equivalent.
\end{abstract}

\maketitle

\allowdisplaybreaks[1]

\section{Introduction}

Over the past decade, \ainf-categories have experienced a
re\-sur\-gence of interest due to applications in symplectic geometry,
deformation theory, non-commutative geometry, homological algebra, and
physics.

The notion of \ainf-category is a generalization of Stasheff's notion
of \ainf-algebra \cite{Stasheff:HomAssoc}. On the other hand,
\ainf-categories generalize differential graded categories. In contrast
to differential graded categories, composition in \ainf-categories is
associative only up to homotopy that satisfies certain equation up to
another homotopy, and so on. The notion of \ainf-category appeared in
the work of Fukaya on Floer homology \cite{Fukaya:A-infty} and was
related to mirror symmetry by Kontsevich
\cite{Kontsevich:alg-geom/9411018}. Basic concepts of the theory of
\ainf-categories have been developed by Fukaya
\cite{Fukaya:FloerMirror-II}, Keller \cite{math.RA/9910179},
Lef\`evre-Hasegawa \cite{Lefevre-Ainfty-these},
Lyubashenko~\cite{Lyu-AinfCat}, Soibelman \cite{MR2095670}.

The definition of \ainf-category does not assume the existence of
identity morphisms. The use of \ainf-categories without identities
requires caution: for example, there is no a sensible notion of
isomorphic objects, the notion of equivalence does not make sense, etc.
In order to develop a comprehensive theory of \ainf-categories, a
notion of unital \ainf-category, i.e., \ainf-category with identity
morphisms (also called units), is necessary. The obvious notion of
strictly unital \ainf-category, despite its technical advantages, is
not quite satisfactory: it is not homotopy invariant, meaning that it
does not translate along homotopy equivalences. Different definitions
of (weakly) unital \ainf-category have been suggested by the first
author \cite[Definition~7.3]{Lyu-AinfCat}, by Kontsevich and Soibelman
\cite[Definition~4.2.3]{math.RA/0606241}, and by Fukaya
\cite[Definition~5.11]{Fukaya:FloerMirror-II}. We prove that these
definitions are equivalent. The main ingredient of the proofs is the
Yoneda Lemma for unital (in the sense of Lyubashenko) \ainf-categories
proven in \cite[Appendix~A]{LyuMan-AmodSerre}, see also
\cite[Appendix~A]{math.CT/0306018}.

\section{Preliminaries}

We follow the notation and conventions of \cite{Lyu-AinfCat}, sometimes
without explicit mentioning. Some of the conventions are recalled here.

Throughout, \(\kk\) is a commutative ground ring. A graded
\(\kk\)\n-module always means a \(\ZZ\)\n-graded \(\kk\)\n-module.

A \emph{graded quiver} \(\ca\) consists of a set \(\Ob\ca\) of objects
and a graded \(\kk\)\n-module \(\ca(X,Y)\), for each \(X,Y\in\Ob\ca\).
A \emph{morphism of graded quivers} \(f:\ca\to\cb\) of degree \(n\)
consists of a function \(\Ob f:\Ob\ca\to\Ob\cb\), \(X\mapsto Xf\), and
a \(\kk\)\n-linear map \(f=f_{X,Y}:\ca(X,Y)\to\cb(Xf,Yf)\) of degree
\(n\), for each \(X,Y\in\Ob\ca\).

For a set \(S\), there is a category \(\cQuiver/S\) defined as follows.
Its objects are graded quivers whose set of objects is \(S\). A
morphism \(f:\ca\to\cb\) in \(\cQuiver/S\) is a morphism of graded
quivers of degree \(0\) such that \(\Ob f=\id_S\). The category
\(\cQuiver/S\) is monoidal. The tensor product of graded quivers
\(\ca\) and \(\cb\) is a graded quiver \(\ca\tens\cb\) such that
\[
(\ca\tens\cb)(X,Z)=\bigoplus_{Y\in S}\ca(X,Y)\tens\cb(Y,Z), \quad
X,Z\in S.
\]
The unit object is the \emph{discrete quiver} \(\kk S\) with \(\Ob\kk S=S\)
and
\[
(\kk S)(X,Y)=
\begin{cases}
\kk & \textup{ if \(X=Y\)},\\
0 & \textup{ if \(X\ne Y\)},
\end{cases}
\quad X,Y\in S.
\]
Note that a map of sets \(f:S\to R\) gives rise to a morphism of graded
quivers \(\kk f:\kk S\to\kk R\) with \(\Ob\kk f=f\) and \((\kk
f)_{X,Y}=\id_\kk\) is \(X=Y\) and \((\kk f)_{X,Y}=0\) if \(X\ne Y\),
\(X,Y\in S\).

An \emph{augmented graded cocategory} is a graded quiver \(\cc\)
equip\-ped with the structure of on augmented counital coassociative
coalgebra in the monoidal category \(\cQuiver/\Ob\cc\). Thus, \(\cc\)
comes with a comultiplication \(\Delta:\cc\to\cc\tens\cc\), a counit
\(\eps:\cc\to\kk\Ob\cc\), and an augmentation \(\eta:\kk\Ob\cc\to\cc\),
which are morphisms in \(\cQuiver/\Ob\cc\) satisfying the usual axioms.
A \emph{morphism of augmented graded cocategories} \(f:\cc\to\cd\) is a
morphism of graded quivers of degree \(0\) that preserves the
comultiplication, counit, and augmentation.

The main example of an augmented graded cocategory is the following.
Let \(\ca\) be a graded quiver. Denote by \(T\ca\) the direct sum of
graded quivers \(T^n\ca\), where \(T^n\ca=\ca^{\tens n}\) is the
\(n\)\n-fold tensor product of \(\ca\) in \(\cQuiver/\Ob\ca\); in
particular, \(T^0\ca=\kk\Ob\ca\), \(T^1\ca=\ca\),
\(T^2\ca=\ca\tens\ca\), etc. The graded quiver \(T\ca\) is an augmented
graded cocategory in which the comultiplication is the so called `cut'
comultiplication \(\Delta_0:T\ca\to T\ca\tens T\ca\) given by
\[
f_1\tens\dots\tens f_n\mapsto\sum_{k=0}^n f_1\tens\dots\tens
f_k\bigotimes f_{k+1}\tens\dots\tens f_n,
\]
the counit is given by the projection \(\pr_0:T\ca\to
T^0\ca=\kk\Ob\ca\), and the augmentation is given by the inclusion
\(\inj_0:\kk\Ob\ca=T^0\ca\hookrightarrow T\ca\).

The graded quiver \(T\ca\) admits also the structure of a graded
category, i.e., the structure of a unital associative algebra in the
monoidal category \(\cQuiver/\Ob\ca\). The multiplication
\(\mu:T\ca\tens T\ca\to T\ca\) removes brackets in tensors of the form
\((f_1\tdt f_m)\bigotimes(g_1\tdt g_n)\). The unit \(\eta:\kk\Ob\ca\to
T\ca\) is given by the inclusion
\(\inj_0:\kk\Ob\ca=T^0\ca\hookrightarrow T\ca\).

For a graded quiver \(\ca\), denote by \(s\ca\) its \emph{suspension},
the graded quiver given by \(\Ob s\ca=\Ob\ca\) and
\((s\ca(X,Y))^n=\ca(X,Y)^{n+1}\), for each \(n\in\ZZ\) and
\(X,Y\in\Ob\ca\). An \emph{\ainf-category} is a graded quiver \(\ca\) equipped
with a differential \(b:Ts\ca\to Ts\ca\) of degree \(1\) such that
\((Ts\ca,\Delta_0,\pr_0,\inj_0,b)\) is an \emph{augmented differential
graded cocategory}. In other terms, the equations
\[
b^2=0,\quad b\Delta_0=\Delta_0(b\tens1+1\tens b),\quad b\pr_0=0, \quad
\inj_0 b=0
\]
hold true. Denote by
\[
b_{mn}\defeq\bigl[
T^ms\ca\rto{\inj_m}Ts\ca\rto{b}Ts\ca\rto{\pr_n}T^ns\ca
\bigr]
\]
\emph{matrix coefficients} of \(b\), for \(m,n\ge0\). Matrix
coefficients \(b_{m1}\) are called \emph{components} of \(b\) and
abbreviated by \(b_m\). The above equations imply that \(b_0=0\) and
that \(b\) is unambiguously determined by its components via the
formula
\[
b_{mn}=\sum_{\substack{p+k+q=m\\ p+1+q=n}}1^{\tens p}\tens b_k\tens
1^{\tens q}:T^ms\ca\to T^ns\ca,\quad m,n\ge0.
\]
The equation \(b^2=0\) is equivalent to the system of equations
\[
\sum_{p+k+q=m}(1^{\tens p}\tens b_k\tens 1^{\tens
q})b_{p+1+q}=0:T^ms\ca\to s\ca,\quad m\ge1.
\]
For \ainf-categories \(\ca\) and \(\cb\), an \emph{\ainf-functor}
\(f:\ca\to\cb\) is a morphism of augmented differential graded
cocategories \(f:Ts\ca\to Ts\cb\). In other terms, \(f\) is a morphism
of augmented graded cocategories and preserves the differential,
meaning that \(fb=bf\). Denote by
\[
f_{mn}\defeq\bigl[
T^ms\ca\rto{\inj_m}Ts\ca\rto{f}Ts\cb\rto{\pr_n}T^ns\cb
\bigr]
\]
\emph{matrix coefficients} of \(f\), for \(m,n\ge0\). Matrix
coefficients \(f_{m1}\) are called \emph{components} of \(f\) and
abbreviated by \(f_m\). The condition that \(f\) is a morphism of
augmented graded cocategories implies that \(f_0=0\) and that \(f\) is
unambiguously determined by its components via the formula
\[
f_{mn}=\sum_{i_1+\dots+i_n=m}f_{i_1}\tdt f_{i_n}:T^ms\ca\to
T^ns\cb,\quad m,n\ge0.
\]
The equation \(fb=bf\) is equivalent to the system of equations
    \ifx\chooseClass1
\begin{multline*}
\sum_{i_1+\dots+i_n=m}(f_{i_1}\tdt f_{i_n})b_n
\\
=\sum_{p+k+q=m}(1^{\tens p}\tens b_k\tens 1^{\tens q})
f_{p+1+q}:T^ms\ca\to s\cb,
\end{multline*}
    \else
\[ \sum_{i_1+\dots+i_n=m}(f_{i_1}\tdt f_{i_n})b_n
=\sum_{p+k+q=m}(1^{\tens p}\tens b_k\tens 1^{\tens q})
f_{p+1+q}:T^ms\ca\to s\cb,
\]
    \fi
for \(m\ge1\). An \ainf-functor \(f\) is called \emph{strict} if
\(f_n=0\) for \(n>1\).

\section{Definitions}

\begin{definition}[cf. \cite{Fukaya:FloerMirror-II,math.RA/9910179}]
An \ainf-category $\ca$ is \emph{strictly unital} if, for each
$X\in\Ob\ca$, there is a $\kk$-linear map
\(\sS{_X}\uni^\ca_0:\kk\to(s\ca)^{-1}(X,X)\), called a \emph{strict
unit}, such that the following conditions are satisfied:
$\sS{_X}\uni^\ca_0b_1=0$, the chain maps
$(1\tens\sS{_Y}\uni^\ca_0)b_2,-(\sS{_X}\uni^\ca_0\tens1)b_2:
 s\ca(X,Y)\to s\ca(X,Y)$ are equal to the identity map, for each
\(X,Y\in\Ob\ca\), and $(\cdots\tens\uni^\ca_0\tens\cdots)b_n=0$ if
$n\ge3$.
\end{definition}

For example, differential graded categories are strictly unital.

\begin{definition}[Lyubashenko {\cite[Definition~7.3]{Lyu-AinfCat}}]
An \ainf-ca\-te\-go\-ry \(\ca\) is \emph{unital} if, for each $X\in\Ob\ca$,
there is a $\kk$-linear map $\sS{_X}\uni^\ca_0:\kk\to(s\ca)^{-1}(X,X)$,
called a \emph{unit}, such that the following conditions hold:
$\sS{_X}\uni^\ca_0b_1=0$ and the chain maps
$(1\tens\sS{_Y}\uni^\ca_0)b_2,-(\sS{_X}\uni^\ca_0\tens1)b_2:
 s\ca(X,Y)\to s\ca(X,Y)$ are homotopic to the identity map, for each
\(X,Y\in\Ob\ca\). An arbitrary homotopy between
\((1\tens\sS{_Y}\uni^\ca_0)b_2\) and the identity map is called a
\emph{right unit homotopy}. Similarly, an arbitrary homotopy between
\(-(\sS{_X}\uni^\ca_0\tens1)b_2\) and the identity map is called a
\emph{left unit homotopy}. An \ainf-functor \(f:\ca\to\cb\) between
unital \ainf-categories is \emph{unital} if the cycles
\(\sS{_X}\uni^\ca_0f_1\) and \(\sS{_{Xf}}\uni^\cb_0\) are cohomologous,
i.e., differ by a boundary, for each \(X\in\Ob\ca\).
\end{definition}

Clearly, a strictly unital \ainf-category is unital.

With an arbitrary \ainf-category $\ca$ a strictly unital \ainf-category
$\ca^\su$ with the same set of objects is associated. For each
$X,Y\in\Ob\ca$, the graded $\kk$-module $s\ca^\su(X,Y)$ is given by
\[ s\ca^\su(X,Y)=
\begin{cases}
s\ca(X,Y) & \textup{ if \(X\neq Y\)},\\
s\ca(X,X)\oplus\kk\sS{_X}\uni^{\ca^\su}_0& \textup{ if \(X=Y\)},
\end{cases}
\]
where $\sS{_X}\uni^{\ca^\su}_0$ is a new generator of degree $-1$. The
element $\sS{_X}\uni^{\ca^\su}_0$ is a strict unit by definition, and
the natural embedding $e:\ca\hookrightarrow\ca^\su$ is a strict
\ainf-functor.

\begin{definition}[Kontsevich--Soibelman {\cite[Definition~4.2.3]{math.RA/0606241}}]
A \emph{weak unit} of an \ainf-category $\ca$ is an \ainf-functor
$U:\ca^\su\to\ca$ such that
\[
\bigl[\ca \xhookrightarrow{e} \ca^\su \rto{U} \ca\bigr] =\id_\ca.
\]
\end{definition}

\begin{proposition}\label{pro-unital-structure-implies-unitality}
Suppose that an \ainf-category $\ca$ admits a weak unit. Then the
\ainf-category $\ca$ is unital.
\end{proposition}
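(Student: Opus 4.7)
My plan is to take as candidate units the maps $\sS{_X}\uni^\ca_0\defeq\sS{_X}\uni^{\ca^\su}_0 U_1:\kk\to(s\ca)^{-1}(X,X)$ and verify the axioms of Lyubashenko's definition. Their cycle property follows immediately from the \ainf-functor equation for $U$ in degree $m=1$, namely $U_1 b_1=b_1 U_1$; applying both sides to $\sS{_X}\uni^{\ca^\su}_0$ and using that $\sS{_X}\uni^{\ca^\su}_0 b_1=0$ by strict unitality of $\ca^\su$ gives $\sS{_X}\uni^\ca_0 b_1=0$.

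The heart of the argument is the \ainf-functor equation for $U$ in degree $m=2$:
\[
U_2 b_1+(U_1\tens U_1)b_2=(b_1\tens 1+1\tens b_1)U_2+b_2 U_1.
\]
I would evaluate both sides on elements of the form $x\tens\sS{_Y}\uni^{\ca^\su}_0$, where $x\in s\ca(X,Y)\subset s\ca^\su(X,Y)$. The hypothesis $e\cdot U=\id_\ca$ together with strictness of $e$ forces the restrictions of the components of $U$ to $Ts\ca$ to coincide with those of $\id_{Ts\ca}$; in particular $xU_1=x$, and $U_n$ vanishes on $(s\ca)^{\tens n}$ for $n\ge2$. Combining this with the strict unit relations $\sS{_Y}\uni^{\ca^\su}_0 b_1=0$ and $(x\tens\sS{_Y}\uni^{\ca^\su}_0)b_2^{\ca^\su}=x$ reduces the equation to
\[
(1\tens\sS{_Y}\uni^\ca_0)b_2^\ca-\id=b_1 h-h b_1,
\]
where $h(x)\defeq(x\tens\sS{_Y}\uni^{\ca^\su}_0)U_2$ is a degree $-1$ endomorphism of $s\ca(X,Y)$ and thus a right unit homotopy. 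A symmetric calculation, starting from tensors $\sS{_X}\uni^{\ca^\su}_0\tens y$ and using the strict unit relation $-(\sS{_X}\uni^{\ca^\su}_0\tens 1)b_2^{\ca^\su}=\id$, produces a left unit homotopy $k(y)\defeq(\sS{_X}\uni^{\ca^\su}_0\tens y)U_2$.

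The only delicate point I anticipate is keeping Koszul signs consistent when tensor products of maps of nonzero degree act on elements involving $\sS{_X}\uni^{\ca^\su}_0$, which itself has degree $-1$; but these signs are completely determined by the conventions fixed in the preliminaries, and no further input beyond the $m=1$ and $m=2$ components of the \ainf-functor equation for $U$ is required.
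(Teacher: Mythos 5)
Your proposal is correct and is essentially the paper's own proof: the same candidate units \(\sS{_X}\uni^\ca_0=\sS{_X}\uni^{\ca^\su}_0U_1\), the same use of the \(m=1\) equation \(U_1b_1=b_1U_1\) for the cycle condition, and the same restriction of the \(m=2\) equation to tensors \(x\tens\sS{_Y}\uni^{\ca^\su}_0\) and \(\sS{_X}\uni^{\ca^\su}_0\tens y\) (using that \(eU=\id_\ca\) forces \(U_1|_{s\ca}=\id\) and \(U_n|_{(s\ca)^{\tens n}}=0\) for \(n\ge2\)), yielding the right and left unit homotopies \((1\tens\sS{_Y}\uni^{\ca^\su}_0)U_2\) and \(-(\sS{_X}\uni^{\ca^\su}_0\tens1)U_2\). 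The only blemish is a transcription slip in your displayed relation: the correct Koszul bookkeeping gives the anticommutator form \(-b_1h+1=hb_1+(1\tens\sS{_Y}\uni^\ca_0)b_2\) (as in the paper), not the commutator \(b_1h-hb_1\), which matters since only the anticommutator expresses a chain homotopy — but your derivation, carried out with the stated conventions, produces exactly that form.
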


\begin{proof}
Let $U:\ca^\su\to\ca$ be a weak unit of $\ca$. For each \(X\in\Ob\ca\),
denote by \(\sS{_X}\uni^\ca_0\) the element
\(\sS{_X}\uni^{\ca^\su}_0U_1\in s\ca(X,X)\) of degree \(-1\). It
follows from the equation \(U_1b_1=b_1U_1\) that
\(\sS{_X}\uni^\ca_0b_1=0\). Let us prove that \(\sS{_X}\uni^\ca_0\) are
unit elements of $\ca$.

For each $X,Y\in\Ob\ca$, there is a \(\kk\)\n-linear map
\[
h=(1\tens\sS{_Y}\uni_0)U_2:s\ca(X,Y)\to s\ca(X,Y)
\]
of degree $-1$. The equation
\begin{equation}\label{eq-b1u2-u2b1}
(1\tens b_1+b_1\tens1)U_2 + b_2U_1 = U_2b_1 + (U_1\tens U_1)b_2
\end{equation}
implies that
\[ -b_1h+1 = hb_1 +(1\tens\sS{_Y}\uni^\ca_0)b_2:s\ca(X,Y)\to s\ca(X,Y),
\]
thus $h$ is a right unit homotopy for $\ca$. For each $X,Y\in\Ob\ca$,
there is a \(\kk\)\n-linear map
\[
h'=-(\sS{_X}\uni_0\tens1)U_2:s\ca(X,Y)\to s\ca(X,Y)
\]
of degree $-1$. Equation~\eqref{eq-b1u2-u2b1} implies that
\[ b_1h'-1=-h'b_1 +(\sS{_X}\uni^\ca_0\tens1)b_2:s\ca(X,Y)\to s\ca(X,Y),
\]
thus $h'$ is a left unit homotopy for $\ca$. Therefore, $\ca$ is
unital.
\end{proof}

\begin{definition}[Fukaya {\cite[Definition~5.11]{Fukaya:FloerMirror-II}}]
\label{def-homotopy-unital-ainf-category}
An \ainf-category $\cc$ is called \emph{homotopy unital} if the graded
quiver
\[ \cc^+ = \cc\oplus\kk\cc\oplus s\kk\cc
\]
(with \(\Ob\cc^+=\Ob\cc\)) admits an \ainf-structure $b^+$ of the
following kind. Denote the generators of the second and the third
direct summands of the graded quiver \(s\cc^+=s\cc\oplus s\kk\cc\oplus
s^2\kk\cc\) by \(\sS{_X}\uni^{\cc^\su}_0=1s\) and \(\bj^\cc_X=1s^2\) of
degree respectively $-1$ and $-2$, for each \(X\in\Ob\cc\). The
conditions on $b^+$ are:
\begin{enumerate}
\renewcommand{\labelenumi}{(\arabic{enumi})}
\item for each \(X\in\Ob\cc\), the element
\(\sS{_X}\uni^\cc_0\defeq\sS{_X}\uni^{\cc^\su}_0-\bj^\cc_Xb^+_1\) is
contained in \(s\cc(X,X)\);

\item $\cc^+$ is a strictly unital \ainf-category with strict
units \(\sS{_X}\uni^{\cc^\su}_0\), \(X\in\Ob\cc\);

\item the embedding \(\cc\hookrightarrow\cc^+\) is a strict
\ainf-functor;

\item \((s\cc\oplus s^2\kk\cc)^{\tens n}b^+_n\subset s\cc\), for each
$n>1$.
\end{enumerate}
\end{definition}

In particular, $\cc^+$ contains the strictly unital \ainf-category
\(\cc^\su=\cc\oplus\kk\cc\). A version of this definition suitable for
filtered \ainf-algebras (and filtered \ainf-categories) is given by
Fukaya, Oh, Ohta and Ono in their book
\cite[Definition~8.2]{FukayaOhOhtaOno:Anomaly}.

Let $\cd$ be a strictly unital \ainf-category with strict units
\(\uni^\cd_0\). Then it has a canonical homotopy unital structure
\((\cd^+,b^+)\). Namely,
\(\bj^\cd_Xb^+_1=\sS{_X}\uni^{\cd^\su}_0-\sS{_X}\uni^\cd_0\), and
\(b^+_n\) vanishes for each $n>1$ on each summand of
 \((s\cd\oplus s^2\kk\cd)^{\tens n}\) except on \(s\cd^{\tens n}\),
where it coincides with \(b^\cd_n\). Verification of the equation
\((b^+)^2=0\) is a straightforward computation.

\begin{proposition}\label{prop-homotopy-unital-unital}
An arbitrary homotopy unital \ainf-category is unital.
\end{proposition}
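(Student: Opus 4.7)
The plan is to extract the required unit data of $\cc$ from the homotopy unital structure $(\cc^+,b^+)$, mirroring the argument of \propref{pro-unital-structure-implies-unitality}. For each $X\in\Ob\cc$, I take $\sS{_X}\uni^\cc_0\in s\cc(X,X)$ as given by condition~(1) of \defref{def-homotopy-unital-ainf-category}. Strictness of the embedding $\cc\hookrightarrow\cc^+$ (condition~(3)) makes $b^+_1$ on $s\cc$ agree with $b_1$, so the identity $(b^+_1)^2=0$ together with the fact that $\sS{_X}\uni^{\cc^\su}_0$ is a cycle (condition~(2)) yields
\[
\sS{_X}\uni^\cc_0 b_1 = (\sS{_X}\uni^{\cc^\su}_0 - \bj^\cc_X b^+_1) b^+_1 = 0.
\]

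For the right unit homotopy, I would set
\[
h = (1\tens\bj^\cc_Y) b^+_2 : s\cc(X,Y)\to s\cc^+(X,Y).
\]
Condition~(4) guarantees that $h$ actually lands in $s\cc(X,Y)$, since $s\cc\tens s^2\kk\cc$ is a summand of $(s\cc\oplus s^2\kk\cc)^{\tens 2}$. The component of $(b^+)^2=0$ mapping $T^2 s\cc^+\to s\cc^+$ reads
\[
(b^+_1\tens 1) b^+_2 + (1\tens b^+_1) b^+_2 + b^+_2 b^+_1 = 0.
\]
Evaluating this on $f\tens\bj^\cc_Y$ for $f\in s\cc(X,Y)$, substituting $\bj^\cc_Y b^+_1 = \sS{_Y}\uni^{\cc^\su}_0 - \sS{_Y}\uni^\cc_0$, and using both the strict unit identity $(1\tens\sS{_Y}\uni^{\cc^\su}_0) b^+_2 = \id$ and the strict embedding identity $b^+_2|_{s\cc\tens s\cc} = b_2$, one obtains a relation of the form
\[
\id - (1\tens\sS{_Y}\uni^\cc_0) b_2 = b_1 h + h b_1,
\]
with signs handled exactly as in the proof of \propref{pro-unital-structure-implies-unitality}. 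Thus $h$ is a right unit homotopy.

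The left unit homotopy $h' = -(\bj^\cc_X\tens 1) b^+_2$ is constructed symmetrically, using the corresponding component of $(b^+)^2=0$ and the strict unit identity $-(\sS{_X}\uni^{\cc^\su}_0\tens 1) b^+_2 = \id$. The only real obstacle is careful bookkeeping of the Koszul signs arising in the bar construction; once these are matched with the conventions of the preceding proposition the verification is mechanical. Conceptually, condition~(1) exhibits $\bj^\cc_X$ as a $b^+_1$\nobreakdash-primitive of the difference $\sS{_X}\uni^{\cc^\su}_0 - \sS{_X}\uni^\cc_0$, and contracting it via $b^+_2$ converts the strict unit relations of $\cc^+$ into the homotopy unit relations needed for $\cc$.
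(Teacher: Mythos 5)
Your proposal is correct and follows essentially the same route as the paper: the same unit elements $\sS{_X}\uni^\cc_0$ from condition~(1), the same homotopies $(1\tens\bj^\cc)b^+_2$ and $-(\bj^\cc\tens1)b^+_2$, and the same component of $(b^+)^2=0$ evaluated on $s\cc\tens\bj^\cc$ and $\bj^\cc\tens s\cc$. (The paper's computation actually gives $(1\tens\uni^\cc_0)b_2-\id=b_1h+hb_1$, the opposite sign to your displayed relation, but this is immaterial since homotopy of chain maps is a symmetric relation.)
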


\begin{proof}
Let \(\cc\subset\cc^+\) be a homotopy unital category. We claim that
the distinguished cycles \(\sS{_X}\uni^\cc_0\in\cc(X,X)[1]^{-1}\),
\(X\in\Ob\cc\), turn \(\cc\) into a unital \ainf-category. Indeed, the
identity
\[
(1\tens b^+_1+b^+_1\tens1)b^+_2+b^+_2b^+_1=0
\]
applied to \(s\cc\tens\bj^\cc\) or to \(\bj^\cc\tens s\cc\) implies
\begin{alignat*}2
(1\tens\uni^\cc_0)b^\cc_2 &= 1 +(1\tens\bj^\cc)b^+_2b^\cc_1
+b^\cc_1(1\tens\bj^\cc)b^+_2 &&: s\cc \to s\cc,
\\
(\uni^\cc_0\tens1)b^\cc_2 &= -1 +(\bj^\cc\tens1)b^+_2b^\cc_1
+b^\cc_1(\bj^\cc\tens1)b^+_2 &&: s\cc \to s\cc.
\end{alignat*}
Thus, \((1\tens\bj^\cc)b^+_2:s\cc \to s\cc\) and
\((\bj^\cc\tens1)b^+_2:s\cc \to s\cc\) are unit homotopies. Therefore,
the \ainf-category \(\cc\) is unital.
\end{proof}

The converse of \propref{prop-homotopy-unital-unital} holds true as
well.

\begin{theorem}\label{thm-homotopy-unital-category-unital}
An arbitrary unital \ainf-category $\cc$ with unit elements
\(\uni^\cc_0\) admits a homotopy unital structure \((\cc^+,b^+)\) with
 \(\bj^\cc b^+_1=\uni^{\cc^\su}_0-\uni^\cc_0\).
\end{theorem}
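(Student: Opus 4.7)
Write $T = s\cc$, $U = s\kk\cc$, $V = s^2\kk\cc$, so that $s\cc^+ = T \oplus U \oplus V$. Conditions~(1)--(3) of \defref{def-homotopy-unital-ainf-category}, together with the prescribed formula $\bj^\cc b^+_1 = \uni^{\cc^\su}_0 - \uni^\cc_0$, rigidly determine $b^+$ on most summands of $(s\cc^+)^{\tens n}$: on $T^{\tens n}$ it equals $b^\cc_n$ by strictness of the inclusion $\cc \hookrightarrow \cc^+$; on summands containing at least one $U$-factor it is forced by strict unitality with respect to $\uni^{\cc^\su}_0$, namely $b^+_1|_U = 0$, $b^+_2$ acts as $\pm\id$ when one argument is $\uni^{\cc^\su}_0$, and $b^+_n = 0$ for $n \ge 3$ on any input with a $U$-factor; and $b^+_1|_V$ is the map prescribed by the statement. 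Thus the only freedom lies in the components $b^+_n$ on summands of $(T \oplus V)^{\tens n}$ with at least one $V$-factor and no $U$-factor, and condition~(4) forces their outputs to lie in $T$.

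I would construct these pieces by induction on the pair (number $k \ge 1$ of $V$-factors, arity $n \ge 2$) ordered lexicographically. At each stage the equation $(b^+)^2 = 0$, combined with the already defined parts of $b^+$, becomes a linear condition of the form $b^\cc_1 \circ \beta + \beta \circ b^\cc_1 = \gamma$, where $\beta$ is the unknown piece of $b^+_n$ on the current summand and $\gamma$ is an explicit expression in components of $b^\cc$ and in previously constructed parts of $b^+$. In the base case $(n, k) = (2, 1)$, evaluation on $x \tens \bj^\cc$ and on $\bj^\cc \tens y$ reduces---by a computation identical in form to that of the proof of \propref{pro-unital-structure-implies-unitality}---to the defining equation of a right (respectively left) unit homotopy for $\uni^\cc_0$; since $\cc$ is unital such homotopies exist and can be taken as the corresponding components of $b^+_2$.

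For the inductive step, the previously established $A_\infty$-relations automatically force $\gamma$ to be $b^\cc_1$-closed, a routine but notationally dense bookkeeping exercise. The main obstacle, and the crux of the proof, is to show that $\gamma$ is a $b^\cc_1$-boundary, so that a primitive $\beta$ exists. For this I would invoke the Yoneda Lemma for unital \ainf-categories \cite[Appendix~A]{LyuMan-AmodSerre}, which realises $\cc$ by a unital \ainf-functor $Y \colon \cc \to \cD$ into a strictly unital differential graded category $\cD$ such that $Y_1$ induces an isomorphism on the cohomology of each Hom-complex. Since $\cD$ is strictly unital it carries the canonical homotopy unital structure $\cD^+$ described right after \defref{def-homotopy-unital-ainf-category}, in which the analogous obstructions vanish tautologically; transferring this vanishing back to $\cc$ via the quasi-isomorphism $Y_1$ supplies the required primitive $\beta$. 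Iterating across all admissible $(n, k)$ produces a $b^+$ satisfying all four conditions of \defref{def-homotopy-unital-ainf-category}.
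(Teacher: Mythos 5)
Your skeleton agrees with the paper's proof: pass to a strictly unital differential graded model of \(\cc\), let strict unitality force all components involving a \(\uni^{\cc^\su}_0\)-factor, run a double induction over arity and number of \(\bj^\cc\)-factors, and seed it with unit homotopies (your lexicographic ordering by \((k,n)\) versus the paper's ordering with arity outermost is an inessential variation). But the step you yourself identify as the crux --- producing the primitive \(\beta\) of \(\gamma\) --- is precisely where your plan has a genuine gap. ``Transferring the vanishing back via \(Y_1\)'' is not yet an argument: at the moment \(\gamma\) appears there exists no map from the partially constructed structure on \(\cc^+\) to \(\cd^+\) along which anything could be transferred, and closedness of \(\gamma\) in \(\uCom(\cn_k(X,Y),s\cc(X,Y))\) buys nothing, because that complex is not acyclic. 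What the paper actually does is construct, \emph{within the same induction}, the components \(\phi^+_n\) of an \ainf-functor \(\phi^+:\cc^+\to\cd^+\) extending \(\phi\) and subject to four conditions parallel to those of \defref{def-homotopy-unital-ainf-category}; the obstruction is then the \emph{pair} \((\lambda_n,\nu_n)\), which is a cycle in \(\Cone^1(u)\) for the map \(u:\lambda\mapsto\lambda\phi_1\), and contractibility of \(\Cone(u)\) produces \(b^+_n\) and \(\phi^+_n\) simultaneously. The coupling is not a convenience but the mechanism: even after finding some primitive \(\beta\) of \(\lambda_n\), the residual obstruction \(\nu_n-\beta u\) to extending the functor is closed but not automatically exact in \(\uCom(\cn_k(X,Y),s\cd(X,Y))\) (again a non-acyclic complex), and \(\beta\) must in general be corrected by a cycle; for the same reason your base-case prescription ``take any unit homotopy as the component of \(b^+_2\)'' needs the caveat that the choice must be made compatibly with \(\phi^+_2\).

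A second, smaller defect: you ask only that \(Y_1\) induce isomorphisms on cohomology of the Hom-complexes, and over the ground ring \(\kk\) (not assumed a field) a quasi-isomorphism is too weak. The cone argument needs \(u\) to be a homotopy equivalence, so that \(\Cone(u)\) is \emph{contractible} (e.g.\ by \cite[Lemma~B.1]{Lyu-AinfCat}) for the arbitrary source complexes \(\cn_k(X,Y)\); the functor \(\uCom(N,-)\) preserves contractibility but not acyclicity, so a merely quasi-invertible \(Y_1\) would not do. This is exactly why the paper invokes \cite[Corollary~A.12]{math.CT/0306018} (itself a consequence of the Yoneda Lemma you cite) to obtain an \ainf-\emph{equivalence} \(\phi:\cc\to\cd\) with \(\Ob\phi=\id_{\Ob\cc}\), whose first component is homotopy invertible. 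With these two repairs --- the simultaneous construction of \(\phi^+\) with the mapping-cone obstruction argument, and homotopy invertibility of \(\phi_1\) in place of quasi-isomorphism --- your outline becomes the paper's proof of \thmref{thm-homotopy-unital-category-unital}.
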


\begin{proof}
By \cite[Corollary~A.12]{math.CT/0306018}, there exists a differential
graded category \(\cd\) and an \ainf-equivalence \(\phi:\cc\to\cd\). By
\cite[Remark~A.13]{math.CT/0306018}, we may choose \(\cd\) and \(\phi\)
such that \(\Ob\cd=\Ob\cc\) and \(\Ob\phi=\id_{\Ob\cc}\). Being
strictly unital $\cd$ admits a canonical homotopy unital structure
\((\cd^+,b^+)\). In the sequel, we may assume that $\cd$ is a strictly
unital \ainf-category equivalent to $\cc$ via $\phi$ with the mentioned
properties. Let us construct simultaneously an \ainf-structure $b^+$ on
$\cc^+$ and an \ainf-functor \(\phi^+:\cc^+\to\cd^+\) that will turn
out to be an equivalence.

Let us extend the homotopy isomorphism \(\phi_1:s\cc\to s\cd\) to a
chain quiver map \(\phi_1^+:s\cc^+\to s\cd^+\). The \ainf-equivalence
\(\phi:\cc\to\cd\) is a unital \ainf-functor, i.e., for each
\(X\in\Ob\cc\), there exists \(v_X\in\cd(X,X)[1]^{-2}\) such that
\(\sS{_X}\uni^\cd_0 -\sS{_X}\uni^\cc_0\phi_1 =v_Xb_1\). In order that
\(\phi^+\) be strictly unital, we define
\(\sS{_X}\uni^{\cc^\su}_0\phi^+_1=\sS{_X}\uni^{\cd^\su}_0\). We should
have
\begin{multline*}
\bj^\cc_X\phi^+_1b^+_1 =\bj^\cc_Xb^+_1\phi^+_1
=\sS{_X}\uni^{\cc^\su}_0\phi^+_1-\sS{_X}\uni^\cc_0\phi_1
    \ifx\chooseClass1
\\
    \fi
=\sS{_X}\uni^{\cd^\su}_0 -\sS{_X}\uni^\cd_0 +\sS{_X}\uni^\cd_0
-\sS{_X}\uni^\cc_0\phi_1 =(\bj^\cc_X+v_X)b^+_1,
\end{multline*}
so we define \(\bj^\cc_X\phi^+_1=\bj^\cd_X+v_X\).

We claim that there is a homotopy unital structure \((\cc^+,b^+)\) of
$\cc$ satisfying the four conditions of
\defref{def-homotopy-unital-ainf-category}
and an \ainf-functor \(\phi^+:\cc^+\to\cd^+\) satisfying four parallel
conditions:
\begin{enumerate}
\renewcommand{\labelenumi}{(\arabic{enumi})}
\item the first component of $\phi^+$ is the quiver morphism $\phi^+_1$
constructed above;

\item the \ainf-functor $\phi^+$ is strictly unital;

\item the restriction of $\phi^+$ to \(\cc\) gives $\phi$;

\item \((s\cc\oplus s^2\kk\cc)^{\tens n}\phi^+_n\subset s\cd\), for
each $n>1$.
\end{enumerate}
Notice that in the presence of conditions (2) and (3) the first
condition reduces to \(\bj^\cc_X(\phi^+)_1=\bj^\cd_X+v_X\), for each
$X\in\Ob\cc$.

Components of the (1,1)\n-coderivation \(b^+:Ts\cc^+\to Ts\cc^+\) of
degree 1 and of the augmented graded cocategory morphism
\(\phi^+:Ts\cc^+\to Ts\cd^+\) are constructed by induction. We already
know components \(b^+_1\) and \(\phi^+_1\). Given an integer $n\ge2$,
assume that we have already found components $b^+_m$, $\phi^+_m$ of the
sought $b^+$ and $\phi^+$ for $m<n$ such that the equations
\begin{alignat}2
((b^+)^2)_m &= 0 &: T^ms\cc^+(X,Y) &\to s\cc^+(X,Y),
\label{eq-b2m-0-C-C}
\\
(\phi^+b^+)_m &= (b^+\phi^+)_m &: T^ms\cc^+(X,Y) &\to s\cd^+(Xf,Yf)
\label{eq-fbm-bfm-0-C-B}
\end{alignat}
are satisfied for all $m<n$. Define $b^+_n$, $\phi^+_n$ on direct
summands of \(T^ns\cc^+\) which contain a factor \(\uni^{\cc^\su}_0\)
by the requirement of strict unitality of $\cc^+$ and $\phi^+$. Then
equations \eqref{eq-b2m-0-C-C}, \eqref{eq-fbm-bfm-0-C-B} hold true for
$m=n$ on such summands. Define $b^+_n$, $\phi^+_n$ on the direct
summand \(T^ns\cc\subset T^ns\cc^+\) as $b^\cc_n$ and $\phi_n$. Then
equations \eqref{eq-b2m-0-C-C}, \eqref{eq-fbm-bfm-0-C-B} hold true for
$m=n$ on the summand \(T^ns\cc\). It remains to construct those
components of \(b^+\) and \(\phi^+\) which have \(\bj^\cc\) as one of
their arguments.

Extend $b_1:s\cc\to s\cc$ to \(b'_1:s\cc^+\to s\cc^+\) by
\(\uni^{\cc^\su}_0b'_1=0\) and \(\bj^\cc b'_1=0\). Define
\(b^-_1=b^+_1-b'_1:s\cc^+\to s\cc^+\). Thus,
\(b^-_1\big|_{s\cc^\su}=0\),
 \(\bj^\cc b^-_1=\uni^{\cc^\su}_0-\uni^\cc_0\) and \(b^+_1=b'_1+b^-_1\).
Introduce for \(0\le k\le n\) the graded subquiver \(\cn_k\subset T^n(s\cc\oplus
s^2\kk\cc)\) by
\[
\cn_k=
\bigoplus_{p_0+p_1+\dots+p_k+k=n}
 T^{p_0}s\cc\tens\bj^\cc\tens T^{p_1}s\cc\tdt\bj^\cc\tens T^{p_k}s\cc
\]
stable under the differential
 \(d^{\cn_k}=\sum_{p+1+q=n}1^{\tens p}\tens b'_1\tens1^{\tens q}\), and
the graded subquiver \(\cp_l\subset T^ns\cc^+\) by
\[
\cp_l=
\bigoplus_{p_0+p_1+\dots+p_l+l=n}
 T^{p_0}s\cc^\su\tens\bj^\cc\tens T^{p_1}s\cc^\su\tdt\bj^\cc\tens T^{p_l}s\cc^\su.
\]
There is also the subquiver
\[
\cq_k = \bigoplus_{l=0}^k \cp_l \subset T^ns\cc^+
\]
and its complement
\[
\cq_k^\bot = \bigoplus_{l=k+1}^n \cp_l \subset T^ns\cc^+.
\]
Notice that the subquiver $\cq_k$ is stable under the differential
 \(d^{\cq_k}=\sum_{p+1+q=n}1^{\tens p}\tens b^+_1\tens1^{\tens q}\),
and $\cq_k^\bot$ is stable under the differential
 \(d^{\cq^\bot_k}=\sum_{p+1+q=n}1^{\tens p}\tens b'_1\tens1^{\tens q}\).
Furthermore, the image of
 \(1^{\tens a}\tens b^-_1\tens1^{\tens c}:\cn_k\to T^ns\cc^+\) is
contained in \(\cq_{k-1}\) for all $a,c\ge0$ such that \(a+1+c=n\).

Firstly, the components $b^+_n$, $\phi^+_n$ are defined on the
graded subquivers \(\cn_0=T^ns\cc\) and \(\cq_0=T^ns\cc^\su\). Assume
for an integer \(0<k\le n\) that restrictions of $b^+_n$, $\phi^+_n$ to
\(\cn_l\) are already found for all $l<k$. In other terms, we are given
\(b^+_n:\cq_{k-1}\to s\cc^+\), \(\phi^+_n:\cq_{k-1}\to s\cd\) such that
equations \eqref{eq-b2m-0-C-C}, \eqref{eq-fbm-bfm-0-C-B} hold on
\(\cq_{k-1}\). Let us construct the restrictions $b^+_n:\cn_k\to s\cc$,
$\phi^+_n:\cn_k\to s\cd$, performing the induction step.

Introduce a (1,1)\n-coderivation $\tilde{b}:Ts\cc^+\to Ts\cc^+$ of
degree $1$ by its components
 $(0,b^+_1,\dots,b^+_{n-1}, \pr_{\cq_{k-1}}\cdot
b^+_n|_{\cq_{k-1}},0,\dots)$. Introduce also a morphism of augmented
graded cocategories
 \(\tilde{\phi}:Ts\cc^+\to Ts\cd^+\) with \(\Ob\tilde{\phi}=\Ob\phi\)
by its components
    \ifx\chooseClass1
$(\phi^+_1,\dots,\phi^+_{n-1},\pr_{\cq_{k-1}}\cdot\phi^+_n|_{\cq_{k-1}},0,\dots)$.
    \else
\[
(\phi^+_1,\dots,\phi^+_{n-1},\pr_{\cq_{k-1}}\cdot\phi^+_n|_{\cq_{k-1}},0,\dots).
\]
    \fi
Here
\(\pr_{\cq_{k-1}}:T^ns\cc^+\to\cq_{k-1}\) is the natural projection,
vanishing on $\cq_{k-1}^\bot$. Then
\(\lambda\defeq\tilde{b}^2:Ts\cc^+\to Ts\cc^+\) is a
(1,1)\n-coderivation of degree $2$ and
 \(\nu\defeq-\tilde{\phi}b^++\tilde{b}\tilde{\phi}:
 Ts\cc^+\to Ts\cd^+\)
is a $(\tilde{\phi},\tilde{\phi})$-coderivation of degree $1$.
Equations \eqref{eq-b2m-0-C-C}, \eqref{eq-fbm-bfm-0-C-B} imply that
$\lambda_m=0$, $\nu_m=0$ for $m<n$. Moreover, $\lambda_n$, $\nu_n$
vanish on \(\cq_{k-1}\). On the complement the $n$\n-th components equal
    \ifx\chooseClass1
\begin{align*}
\lambda_n =& \sum_{a+r+c=n}^{1<r<n}
(1^{\tens a}\tens b^+_r\tens1^{\tens c})b^+_{a+1+c}
\\
+&\sum_{a+1+c=n}(1^{\tens a}\tens b^-_1\tens1^{\tens c})\tilde{b}_n:
\cq_{k-1}^\bot\to s\cc^+,
\\
\nu_n =&
-\sum_{i_1+\dots+i_r=n}^{1<r\le n} (\phi^+_{i_1}\tdt\phi^+_{i_r})b^+_r
\\
+&\sum_{a+r+c=n}^{1<r<n}
(1^{\tens a}\tens b^+_r\tens1^{\tens c})\phi^+_{a+1+c}
\\
+&\sum_{a+1+c=n}(1^{\tens a}\tens b^-_1\tens1^{\tens c})\tilde{\phi}_n:
\cq_{k-1}^\bot\to s\cd.
\end{align*}
    \else
\begin{align*}
\lambda_n =& \sum_{a+r+c=n}^{1<r<n}
(1^{\tens a}\tens b^+_r\tens1^{\tens c})b^+_{a+1+c}
+\sum_{a+1+c=n}(1^{\tens a}\tens b^-_1\tens1^{\tens c})\tilde{b}_n:
\cq_{k-1}^\bot\to s\cc^+,
\\
\nu_n =&
-\sum_{i_1+\dots+i_r=n}^{1<r\le n} (\phi^+_{i_1}\tdt\phi^+_{i_r})b^+_r
+\sum_{a+r+c=n}^{1<r<n}
(1^{\tens a}\tens b^+_r\tens1^{\tens c})\phi^+_{a+1+c}
\\
 &\hphantom{\sum_{a+r+c=n}^{1<r<n}(1^{\tens a}\tens b^+_r\tens1^{\tens c})b^+_{a+1+c}}
+\sum_{a+1+c=n}(1^{\tens a}\tens b^-_1\tens1^{\tens c})\tilde{\phi}_n:
\cq_{k-1}^\bot\to s\cd.
\end{align*}
    \fi
The restriction \(\lambda_n|_{\cn_k}\) takes values in $s\cc$. Indeed,
for the first sum in the expression for $\lambda_n$ this follows by the
induction assumption since $r>1$ and \(a+1+c>1\). For the second sum
this follows by the induction assumption and strict unitality if $n>2$.
In the case of $n=2$, $k=1$ this is also straightforward. The only case
which requires computation is $n=2$, $k=2$:
\[ (\bj^\cc\tens\bj^\cc)(1\tens b^-_1+b^-_1\tens1)\tilde{b}_2
=\bj^\cc -(\bj^\cc\tens\uni^\cc_0)b^+_2 -\bj^\cc
-(\uni^\cc_0\tens\bj^\cc)b^+_2,
\]
which belongs to $s\cc$ by the induction assumption.

Equations \eqref{eq-b2m-0-C-C}, \eqref{eq-fbm-bfm-0-C-B} for $m=n$ take
the form
\begin{align}
-b^+_nb_1 -\sum_{a+1+c=n}(1^{\tens a}\tens b'_1\tens1^{\tens c})b^+_n
=\lambda_n: \cn_k &\to s\cc,
 \label{eq-bnb1-lambda-TsC-sC}
\\
\phi^+_nb_1
-\sum_{a+1+c=n}(1^{\tens a}\tens b'_1\tens1^{\tens c})\phi^+_n
-b^+_n\phi_1 =\nu_n: \cn_k &\to s\cd.
 \label{eq-fnb1-nu-TsC-sB}
\end{align}
For arbitrary objects $X$, $Y$ of $\cc$, equip the graded
\(\kk\)\n-module \(\cn_k(X,Y)\) with the differential
\(d^{\cn_k}=\sum_{p+1+q=n}1^{\tens p}\tens b'_1\tens1^{\tens q}\) and
denote by \(u\) the chain map
\begin{align*}
\uCom(\cn_k(X,Y),s\cc(X,Y)) &\to \uCom(\cn_k(X,Y),s\cd(X\phi,Y\phi)),\\
\lambda&\mapsto\lambda\phi_1.
\end{align*}
Since $\phi_1$ is homotopy invertible, the map $u$ is homotopy
invertible as well. Therefore, the complex $\Cone(u)$ is contractible,
e.g. by \cite[Lemma~B.1]{Lyu-AinfCat}, in particular, acyclic.
Equations \eqref{eq-bnb1-lambda-TsC-sC} and \eqref{eq-fnb1-nu-TsC-sB}
have the form $-b^+_nd=\lambda_n$, $\phi^+_nd+b^+_nu=\nu_n$, that is,
the element \((\lambda_n,\nu_n)\) of
    \ifx\chooseClass1
\begin{multline*}
\ucom^2(\cn_k(X,Y),s\cc(X,Y))\oplus\ucom^1(\cn_k(X,Y),s\cd(X\phi,Y\phi))
\\
=\Cone^1(u)
\end{multline*}
    \else
\[ \ucom^2(\cn_k(X,Y),s\cc(X,Y))\oplus\ucom^1(\cn_k(X,Y),s\cd(X\phi,Y\phi))
=\Cone^1(u)
\]
    \fi
has to be the boundary of the sought element \((b^+_n,\phi^+_n)\) of
    \ifx\chooseClass1
\begin{multline*}
\ucom^1(\cn_k(X,Y),s\cc(X,Y))\oplus\ucom^0(\cn_k(X,Y),s\cd(X\phi,Y\phi))
\\
=\Cone^0(u).
\end{multline*}
    \else
\[ \ucom^1(\cn_k(X,Y),s\cc(X,Y))\oplus\ucom^0(\cn_k(X,Y),s\cd(X\phi,Y\phi))
=\Cone^0(u).
\]
    \fi
These equations are solvable because \((\lambda_n,\nu_n)\) is a cycle
in \(\Cone^1(u)\). Indeed, the equations to verify $-\lambda_nd=0$,
$\nu_nd+\lambda_nu=0$ take the form
\begin{align*}
-\lambda_nb_1
+\sum_{p+1+q=n}(1^{\tens p}\tens b'_1\tens1^{\tens q})\lambda_n =0:
\cn_k &\to s\cc,
\\
\nu_nb_1 +\sum_{p+1+q=n}(1^{\tens p}\tens b'_1\tens1^{\tens q})\nu_n
-\lambda_n\phi_1 =0: \cn_k &\to s\cd.
\end{align*}
Composing the identity
\(-\lambda\tilde{b}+\tilde{b}\lambda=0:T^ns\cc^+\to Ts\cc^+\) with the
projection \(\pr_1:Ts\cc^+\to s\cc^+\) yields the first equation. The
second equation follows by composing the identity
 \(\nu b^++\tilde{b}\nu-\lambda\tilde{\phi}=0:T^ns\cc^+\to Ts\cd^+\)
with \(\pr_1:Ts\cd^+\to s\cd^+\).

Thus, the required restrictions of $b^+_n$, $\phi^+_n$ to $\cn_k$ (and
to $\cq_k$) exist and satisfy the required equations. We proceed by
induction increasing $k$ from $0$ to $n$ and determining $b^+_n$,
$\phi^+_n$ on the whole \(\cq_n=T^ns\cc^+\). Then we replace $n$ with
$n+1$ and start again from \(T^{n+1}s\cc\). Thus the induction on $n$
goes through.
\end{proof}

\begin{remark}
Let \((\cc^+,b^+)\) be a homotopy unital structure of an \ainf-category
$\cc$. Then the embedding \ainf-functor \(\iota:\cc\to\cc^+\) is an
equivalence. Indeed, it is bijective on objects. By
\cite[Theorem~8.8]{Lyu-AinfCat} it suffices to prove that
\(\iota_1:s\cc\to s\cc^+\) is homotopy invertible. And indeed, the
chain quiver map \(\pi_1:s\cc^+\to s\cc\), \(\pi_1|_{s\cc}=\id\),
\(\sS{_X}\uni^{\cc^\su}_0\pi_1=\sS{_X}\uni^\cc_0\),
\(\bj^\cc_X\pi_1=0\), is homotopy inverse to $\iota_1$. Namely, the
homotopy \(h:s\cc^+\to s\cc^+\), \(h|_{s\cc}=0\),
\(\sS{_X}\uni^{\cc^\su}_0h=\bj^\cc_X\), \(\bj^\cc_Xh=0\), satisfies the
equation \(\id_{s\cc^+}-\pi_1\cdot\iota_1=hb^+_1+b^+_1h\).

The equation between \ainf-functors
\[
\bigl[\cc \rto{\iota^\cc} \cc^+ \rto{\phi^+} \cd^+\bigr] =\bigl[\cc
\rto\phi \cd \rto{\iota^\cd} \cd^+\bigr]
\]
obtained in the proof of \thmref{thm-homotopy-unital-category-unital}
implies that \(\phi^+\) is an \ainf-equivalence as well. In particular,
\(\phi^+_1\) is homotopy invertible.
\end{remark}

The converse of \propref{pro-unital-structure-implies-unitality} holds
true as well, however its proof requires more preliminaries. It is
deferred until \secref{sec-proof-unital-implies-unital-structure}.

\section{Double coderivations}

\begin{definition}
For \ainf-functors \(f,g:\ca\to\cb\), a \emph{double
\((f,g)\)\n-coderivation} of degree \(d\) is a system of
\(\kk\)\n-linear maps
\[
r:(Ts\ca\tens Ts\ca)(X,Y)\to Ts\cb(Xf,Yg),\quad X,Y\in\Ob\ca,
\]
of degree \(d\) such that the equation
\begin{equation}\label{equ-rDelta-Delta1fr-1Deltarg}
r\Delta_0=(\Delta_0\tens1)(f\tens r)+(1\tens\Delta_0)(r\tens g)
\end{equation}
holds true.
\end{definition}

Equation~\eqref{equ-rDelta-Delta1fr-1Deltarg} implies that $r$ is
determined by a system of $\kk$-linear maps $r\pr_1:Ts\ca\tens Ts\ca\to
s\cb$ with components of degree $d$
    \ifx\chooseClass1
\begin{multline*}
r_{n,m}:s\ca(X_0,X_1)\tdt s\ca(X_{n+m-1},X_{n+m})
\\
\to s\cb(X_0f,X_{n+m}g),
\end{multline*}
    \else
\[ r_{n,m}:s\ca(X_0,X_1)\tdt s\ca(X_{n+m-1},X_{n+m})
\to s\cb(X_0f,X_{n+m}g),
\]
    \fi
for \(n,m\ge0\), via the formula
\begin{align}
r_{n,m;k}&=(r|_{T^ns\ca\tens T^ms\ca})\pr_k:T^ns\ca\tens T^ms\ca\to T^ks\cb,\nonumber\\
r_{n,m;k}&=
\sum_{\substack{i_1+\dots+i_p+i=n,\\ j_1+\dots+j_q+j=m}}^{p+1+q=k}
f_{i_1}\tdt f_{i_p}\tens r_{i,j}\tens g_{j_1}\tdt g_{j_q}.
\end{align}
This follows from the equation
    \ifx\chooseClass1
\begin{multline}\label{equ-r-Delta-iterated}
r\Delta_0^{(l)}=\sum_{p+1+q=l}(\Delta_0^{(p+1)}\tens\Delta_0^{(q+1)})
(f^{\tens p}\tens r\tens g^{\tens q}):
\\
Ts\ca\tens Ts\ca\to(Ts\cb)^{\tens l},
\end{multline}
    \else
\begin{equation}\label{equ-r-Delta-iterated}
r\Delta_0^{(l)}=\sum_{p+1+q=l}(\Delta_0^{(p+1)}\tens\Delta_0^{(q+1)})
(f^{\tens p}\tens r\tens g^{\tens q}):
Ts\ca\tens Ts\ca\to(Ts\cb)^{\tens l},
\end{equation}
    \fi
which holds true for each \(l\ge 0\). Here $\Delta_0^{(0)}=\eps$,
$\Delta_0^{(1)}=\id$, $\Delta_0^{(2)}=\Delta_0$ and $\Delta_0^{(l)}$
means the cut comultiplication iterated $l-1$ times.

Double \((f,g)\)\n-coderivations form a chain complex, which we are
going to denote by \((\McD(\ca,\cb)(f,g),B_1)\). For each \(d\in\ZZ\),
the component $\McD(\ca,\cb)(f,g)^d$ consists of double
\((f,g)\)\n-coderi\-vations of degree $d$. The differential \(B_1\) of
degree \(1\) is given by
\[
rB_1\defeq rb-(-)^d(1\tens b+b\tens1)r,
\]
for each \(r\in\McD(\ca,\cb)(f,g)^d\). The component \([rB_1]_{n,m}\) of $rB_1$
is given by
\begin{multline}\label{eq-diff}
\sum_{\substack{i_1+\dots+i_p+i=n,\\ j_1+\dots+j_q+j=m}}
(f_{i_1}\tdt f_{i_p}\tens r_{ij}\tens g_{j_1}\tdt g_{j_q})b_{p+1+q}
\\
-(-)^r\sum_{a+k+c=n}(1^{\tens a}\tens b_k\tens 1^{\tens c+m})r_{a+1+c,m}
\\
-(-)^r\sum_{u+t+v=m} (1^{\tens n+u}\tens b_t\tens 1^{\tens
v})r_{n,u+1+v},
\end{multline}
for each \(n,m\ge0\). An \ainf-functor \(h:\cb\to\cc\)  gives rise to a
chain map
\[
\McD(\ca,\cb)(f,g) \to \McD(\ca,\cc)(fh,gh),\quad r\mapsto rh.
\]
The component \([rh]_{n,m}\) of $rh$ is given by
\begin{equation}\label{eq-functor}
\sum_{\substack{i_1+\dots+i_p+i=n,\\ j_1+\dots+j_q+j=m}}
(f_{i_1}\tdt f_{i_p}\tens r_{i,j}\tens g_{j_1}\tdt g_{j_q})h_{p+1+q},
\end{equation}
for each \(n,m\ge0\). Similarly, an \ainf-functor \(k:\cd\to\ca\) gives
rise to a chain map
\[
\McD(\ca,\cb)(f,g)\to\McD(\cd,\cb)(kf,kg),\quad r\mapsto (k\tens k)r.
\]
The component \([(k\tens k)r]_{n,m}\) of \((k\tens k)r\) is given by
\begin{equation}\label{equ-comp-psi-psi-r}
\sum_{\substack{i_1+\dots+i_p=n\\ j_1+\dots+j_q=m}}
(k_{i_1}\tdt k_{i_p}\tens k_{j_1}\tdt k_{j_q})r_{p,q},
\end{equation}
for each \(n,m\ge0\). Proofs of these facts are elementary and are left
to the reader.

Let \(\cc\) be an \ainf-category. For each \(n\ge0\), introduce a
morphism
\[ \nu_n
=\sum_{i=0}^n(-)^{n-i}(1^{\tens i}\tens\eps\tens 1^{\tens n-i}):
(Ts\cc)^{\tens n+1}\to(Ts\cc)^{\tens n},
\]
in \(\cQuiver/\Ob\cc\). In particular,
\(\nu_0=\eps:Ts\cc\to\kk\Ob\cc\). Denote
\(\nu=\nu_1=(1\tens\eps)-(\eps\tens 1): Ts\cc\tens Ts\cc\to Ts\cc\) for
the sake of brevity.

\begin{lemma}\label{lem-nu-double-coder}
The map \(\nu: Ts\cc\tens Ts\cc\to Ts\cc\) is a double
\((1,1)\)\n-coderivation of degree~0 and \(\nu B_1=0\).
\end{lemma}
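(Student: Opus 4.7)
Both claims are direct computations exploiting the formula $\nu=(1\tens\eps)-(\eps\tens1)$.

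For the double coderivation equation $\nu\Delta_0=(\Delta_0\tens1)(1\tens\nu)+(1\tens\Delta_0)(\nu\tens1)$, I would substitute the formula on both sides and verify equality on an element $x\tens y$. The left hand side expands to $\Delta_0\bigl(x\eps(y)-\eps(x)y\bigr)=\Delta_0(x)\eps(y)-\eps(x)\Delta_0(y)$. The right hand side, in Sweedler notation, becomes $\sum x_{(1)}\tens\bigl(x_{(2)}\eps(y)-\eps(x_{(2)})y\bigr)+\sum\bigl(x\eps(y_{(1)})-\eps(x)y_{(1)}\bigr)\tens y_{(2)}$, and the counit identities $\sum x_{(1)}\eps(x_{(2)})=x$ and $\sum\eps(y_{(1)})y_{(2)}=y$ make the inner $\pm x\tens y$ cross-terms cancel, leaving the same expression. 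As a sanity check, the component formula for $r_{n,m;k}$ in terms of the $r_{i,j}$, applied to $\nu_{1,0}=1$, $\nu_{0,1}=-1$ with all other $\nu_{i,j}=0$, reproduces $(1\tens\eps)-(\eps\tens1)$.

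For $\nu B_1=0$, the plan is to show $\nu b=(1\tens b+b\tens1)\nu$ directly. Since $\eps$ takes scalar values, linearity of $b$ yields $\nu b(x\tens y)=b(x)\eps(y)-\eps(x)b(y)$. For the other side, the Koszul sign rule gives $(1\tens b+b\tens1)(x\tens y)=(-)^{|x|}x\tens b(y)+b(x)\tens y$; applying $\nu$ and using $\eps b=0$ (the axiom $b\pr_0=0$) kills the terms involving $\eps\circ b$, and since $\eps(x)\ne0$ forces $|x|=0$ the Koszul sign disappears, producing the same expression $b(x)\eps(y)-\eps(x)b(y)$.

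The main obstacle is bookkeeping---the implicit identification $Ts\cc\tens\kk\Ob\cc\cong Ts\cc$ via the monoidal unit and keeping Koszul signs straight---rather than anything conceptual. A componentwise alternative via \eqref{eq-diff} would restrict attention to $(n,m)=(n,0)$ with $n\ge1$ and $(n,m)=(0,m)$ with $m\ge1$ (the only cases where $\nu$ has nontrivial contributions), and check that the three summands of $[\nu B_1]_{n,m}$ telescope to zero.
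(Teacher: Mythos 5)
Your proof is correct, and it splits against the paper's as follows. For the double-coderivation identity you do essentially what the paper does: your Sweedler-notation cancellation of the cross-terms via the counit axioms is precisely the paper's pair of identities \((\Delta_0\tens1)(1\tens\eps\tens1)=1\tens1=(1\tens\Delta_0)(1\tens\eps\tens1)\), written elementwise. Where you genuinely diverge is the second claim: you prove \(\nu b=(1\tens b+b\tens1)\nu\) directly at the level of maps, using \(b\pr_0=0\) to kill the \(\eps\circ b\) terms and the observation that the counit is supported in degree \(0\), so the Koszul sign is invisible. The paper instead invokes the general principle that a double \((1,1)\)\n-coderivation of degree \(1\) vanishes iff its composite with \(\pr_1\) does, and then checks the componentwise identity~\eqref{eq-diff}, which collapses to \(\chi(n>0)b_{n+m}-\chi(m>0)b_{n+m}-\chi(m=0)b_n+\chi(n=0)b_m=0\). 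Your route is shorter and more elementary for this lemma; the paper's route is the template it reuses immediately in \lemref{lem-xi-double-coder}, where \(\xi B_1=\nu\) admits no comparably trivial direct computation, so the componentwise reduction is the method worth exhibiting. Two small remarks on your asides: your sanity check giving \(\nu_{0,1}=-1\) is the correct sign (it is exactly what makes the \(\chi\)\n-identity balance; the paper's text asserting \(\nu_{0,1}=1\) is a slip), but the parenthetical in your componentwise alternative is off --- the components \([\nu B_1]_{n,m}\) must be checked for all \(n+m\ge1\), not only for \((n,0)\) and \((0,m)\), since for \(n,m>0\) the first summand of~\eqref{eq-diff} contains the two insertions of \(\nu_{1,0}\) and \(\nu_{0,1}\), which cancel each other rather than vanish individually. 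As that aside is not part of your main argument, this does not affect correctness.
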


\begin{proof}
We have:
\begin{multline*}
(\Delta_0\tens 1)(1\tens\nu)+(1\tens\Delta_0)(\nu\tens 1)
\\
\qquad
=(\Delta_0\tens 1)(1\tens 1\tens\eps)
-(\Delta_0\tens1)(1\tens\eps\tens 1)
\hfill
\\
\hfill
+(1\tens\Delta_0)(1\tens\eps\tens 1)
-(1\tens\Delta_0)(\eps\tens 1\tens 1)
\quad
\\
\qquad =(\Delta_0\tens\eps)-(\eps\tens\Delta_0)
=((1\tens\eps)-(\eps\tens 1))\Delta_0=\nu\Delta_0, \hfill
\end{multline*}
due to the identities
    \ifx\chooseClass1
\begin{multline*}
(\Delta_0\tens 1)(1\tens\eps\tens 1)=1\tens1
=(1\tens\Delta_0)(1\tens\eps\tens 1):
\\
Ts\cc\tens Ts\cc\to Ts\cc\tens Ts\cc.
\end{multline*}
    \else
\[ (\Delta_0\tens 1)(1\tens\eps\tens 1)=1\tens1
=(1\tens\Delta_0)(1\tens\eps\tens 1):
 Ts\cc\tens Ts\cc\to Ts\cc\tens Ts\cc.
\]
    \fi
This computation shows that \(\nu: Ts\cc\tens Ts\cc\to Ts\cc\) is a
double \((1,1)\)\n-coderivation. Its only non-vanishing components are
\(\sS{_{X,Y}}\nu_{1,0}=1:s\cc(X,Y)\to s\cc(X,Y)\) and
\(\sS{_{X,Y}}\nu_{0,1}=1:s\cc(X,Y)\to s\cc(X,Y)\), \(X,Y\in\Ob\cc\).

Since \(\nu B_1\) is a double \((1,1)\)\n-coderivation of degree 1, the
equation \(\nu B_1=0\) is equivalent to its particular case \(\nu
B_1\pr_1=0\), i.e., for each \(n,m\ge0\)
    \ifx\chooseClass1
\begin{multline*}
\sum_{\substack{0\le i\le n,\\ 0\le j\le m}}
(1^{\tens n-i}\tens\nu_{i,j}\tens 1^{\tens m-j})b_{n-i+1+m-j}
\\
\hspace*{3em}
-\sum_{a+k+c=n}(1^{\tens a}\tens b_k\tens 1^{\tens c+m})\nu_{a+1+c,m}
\hfill
\\
\hspace*{3em}
-\sum_{u+t+v=m}(1^{\tens n+u}\tens b_t\tens 1^{\tens v})
\nu_{n,u+1+v}=0:
\hfill
\\
T^ns\cc\tens T^ms\cc\to s\cc.
\end{multline*}
    \else
\begin{multline*}
\sum_{\substack{0\le i\le n,\\ 0\le j\le m}}
(1^{\tens n-i}\tens\nu_{i,j}\tens 1^{\tens m-j})b_{n-i+1+m-j}
-\sum_{a+k+c=n}(1^{\tens a}\tens b_k\tens 1^{\tens c+m})\nu_{a+1+c,m}
\\
-\sum_{u+t+v=m}(1^{\tens n+u}\tens b_t\tens 1^{\tens v})
\nu_{n,u+1+v}=0:
T^ns\cc\tens T^ms\cc\to s\cc.
\end{multline*}
    \fi
It reduces to the identity
    \ifx\chooseClass1
\begin{multline*}
\chi(n>0)b_{n+m}-\chi(m>0)b_{n+m}
\\
-\chi(m=0)b_n+\chi(n=0)b_m=0,
\end{multline*}
    \else
\[ \chi(n>0)b_{n+m}-\chi(m>0)b_{n+m} -\chi(m=0)b_n+\chi(n=0)b_m=0,
\]
    \fi
where \(\chi(P)=1\) if a condition \(P\) holds and \(\chi(P)=0\) if
\(P\) does not hold.
\end{proof}

Let $\cc$ be a strictly unital \ainf-category. The strict unit
$\uni^\cc_0$ is viewed as a morphism of graded quivers
$\uni^\cc_0:\kk\Ob\cc\to s\cc$ of degree $-1$, identity on objects. For
each \(n\ge0\), introduce a morphism of graded quivers
    \ifx\chooseClass1
\begin{multline*}
\xi_n=\bigl[(Ts\cc)^{\tens n+1}
\rto{1\tens\uni^\cc_0\tens1\tdt\uni^\cc_0\tens1}
\\
Ts\cc\tens s\cc\tens Ts\cc\tdt s\cc\tens Ts\cc \rto{\mu^{(2n+1)}}
Ts\cc\bigr],
\end{multline*}
    \else
\[ \xi_n=\bigl[(Ts\cc)^{\tens n+1}
\rto{1\tens\uni^\cc_0\tens1\tdt\uni^\cc_0\tens1}
 Ts\cc\tens s\cc\tens Ts\cc\tdt s\cc\tens Ts\cc \rto{\mu^{(2n+1)}}
Ts\cc\bigr],
\]
    \fi
of degree \(-n\), identity on objects. Here \(\mu^{(2n+1)}\) denotes
composition of \(2n+1\) composable arrows in the graded category
\(Ts\cc\). In particular, \(\xi_0=1:Ts\cc\to Ts\cc\). Denote
 \(\xi=\xi_1=(1\tens\uni^\cc_0\tens1)\mu^{(3)}:
 Ts\cc\tens Ts\cc\to Ts\cc\) for the sake of brevity.

\begin{lemma}\label{lem-xi-double-coder}
The map \(\xi:Ts\cc\tens Ts\cc\to Ts\cc\) is a double
\((1,1)\)\n-coderivation of degree~$-1$ and \(\xi B_1=\nu\).
\end{lemma}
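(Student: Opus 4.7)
The plan is to verify the two assertions in turn, using the description of $\xi$ as ``insert $\uni^\cc_0$ in the middle and concatenate'' on $Ts\cc\tens Ts\cc\to Ts\cc$.

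For the coderivation identity \eqref{equ-rDelta-Delta1fr-1Deltarg} with $f=g=1$, I would compute both sides directly. Since $\xi(x\tens y)=x\cdot\uni^\cc_0\cdot y$ (concatenation in the graded category $Ts\cc$), $(\xi\Delta_0)(x\tens y)$ is the sum over all cuts of the word $x\cdot\uni^\cc_0\cdot y$. Cuts falling inside $x$ or between $x$ and $\uni^\cc_0$ assemble into $(\Delta_0\tens 1)(1\tens\xi)(x\tens y)=\sum x'\tens(x''\cdot\uni^\cc_0\cdot y)$ (ranging over $\Delta_0 x=\sum x'\tens x''$), while cuts falling between $\uni^\cc_0$ and $y$ or inside $y$ assemble into $(1\tens\Delta_0)(\xi\tens 1)(x\tens y)=\sum(x\cdot\uni^\cc_0\cdot y')\tens y''$. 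The degree is $-1$ since $\uni^\cc_0$ has degree $-1$ and $\mu^{(3)}$ has degree $0$.

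For $\xi B_1=\nu$: since both sides are double $(1,1)$\n-coderivations, it suffices to check that $[\xi B_1]_{n,m;1}=\nu_{n,m}$ for each $n,m\ge 0$. Unpacking $\xi B_1=\xi b+(1\tens b+b\tens 1)\xi$ (the relative sign comes from $d=-1$), I would first observe that the second summand projects to zero in $T^1 s\cc$: indeed $b$ annihilates $T^0 s\cc$ (since $b_0=0$) and sends $T^{\ge 1}s\cc$ into $T^{\ge 1}s\cc$, so $\xi$ applied afterwards lands in $T^{\ge 2}s\cc$, missing $s\cc$. Hence $[\xi B_1]_{n,m;1}$ reduces to the $T^1$\n-component of $b$ applied to the word $x\tens\uni^\cc_0\tens y\in T^{n+1+m}s\cc$, which is simply $b_{n+1+m}$ evaluated on that word.

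A case analysis governed by strict unitality then finishes the proof: $b_1\uni^\cc_0=0$ handles $(n,m)=(0,0)$; for $k\ge 3$, $b_k$ vanishes on any tensor containing $\uni^\cc_0$, handling $n+1+m\ge 3$; and the strict unit identities $b_2(f\tens\uni^\cc_0)=f$ and $b_2(\uni^\cc_0\tens g)=-g$ yield the value $1$ for $(n,m)=(1,0)$ and $-1$ for $(n,m)=(0,1)$, which match the only non-vanishing components of $\nu=(1\tens\eps)-(\eps\tens 1)$. I foresee no substantive obstacle; the main care required is to keep $\uni^\cc_0$ as a single tensor factor throughout and to track the Koszul signs, which are absorbed into the paper's standard conventions for compositions in $\cQuiver/\Ob\cc$.
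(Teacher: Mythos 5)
Your proposal is correct and follows essentially the same route as the paper: you verify the coderivation identity directly against the cut comultiplication (the paper packages your cut-enumeration as the identity for \(\mu^{(3)}\Delta_0\) combined with \(\uni^\cc_0\Delta_0=\uni^\cc_0\tens\eta+\eta\tens\uni^\cc_0\), which is the same computation), and you reduce \(\xi B_1=\nu\) to \(\pr_1\)\n-components exactly as the paper does, your observation that \((1\tens b+b\tens1)\xi\) misses \(T^1s\cc\) being the paper's remark that the only non-vanishing component of \(\xi\) is \(\xi_{0,0}=\uni^\cc_0\), leaving \((1^{\tens n}\tens\uni^\cc_0\tens1^{\tens m})b_{n+1+m}=\nu_{n,m}\), settled by strict unitality. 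Your case analysis, including the value \(-1\) at \((n,m)=(0,1)\), matches the paper's final equation and is indeed the sign forced by \(\nu=(1\tens\eps)-(\eps\tens1)\).
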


\begin{proof}
The following identity follows directly from the definitions of \(\mu\)
and \(\Delta_0\):
    \ifx\chooseClass1
\begin{multline*}
\mu\Delta_0=(\Delta_0\tens1)(1\tens\mu)+(1\tens\Delta_0)(\mu\tens1)-1:
\\
Ts\cc\tens Ts\cc\to Ts\cc\tens Ts\cc.
\end{multline*}
    \else
\[ \mu\Delta_0=(\Delta_0\tens1)(1\tens\mu)+(1\tens\Delta_0)(\mu\tens1)-1:
Ts\cc\tens Ts\cc\to Ts\cc\tens Ts\cc.
\]
    \fi
It implies
\begin{multline}\label{equ-mu-Delta-relation}
\mu^{(3)}\Delta_0=(\Delta_0\tens1\tens1)(1\tens\mu^{(3)})
+(1\tens1\tens\Delta_0)(\mu^{(3)}\tens1)
\\
    \ifx\chooseClass1
\hskip\multlinegap\hphantom{\mu^{(3)}\Delta_0}
+(1\tens\Delta_0\tens1)(\mu\tens\mu) -(1\tens\mu)-(\mu\tens1):
\hfill
\\
    \else
+(1\tens\Delta_0\tens1)(\mu\tens\mu) -(1\tens\mu)-(\mu\tens1):
    \fi
Ts\cc\tens Ts\cc\tens Ts\cc\to Ts\cc\tens Ts\cc.
\end{multline}
Since
 \(\uni^\cc_0\Delta_0=\uni^\cc_0\tens\eta+\eta\tens\uni^\cc_0:
 \kk\Ob\cc\to Ts\cc\tens Ts\cc\),
it follows that
\begin{multline*}
(1\tens\uni^\cc_0\Delta_0\tens1)(\mu\tens\mu)
-(1\tens(\uni^\cc_0\tens1)\mu) -((1\tens\uni^\cc_0)\mu\tens1)=0:
\\
Ts\cc\tens Ts\cc\to Ts\cc\tens Ts\cc.
\end{multline*}
Equation~\eqref{equ-mu-Delta-relation} yields
\begin{multline*}
(1\tens\uni^\cc_0\tens1)\mu^{(3)}\Delta_0
    \ifx\chooseClass1
\\
    \fi
=(\Delta_0\tens1)(1\tens(1\tens\uni^\cc_0\tens1)\mu^{(3)})
+(1\tens\Delta_0)((1\tens\uni^\cc_0\tens1)\mu^{(3)}\tens1),
\end{multline*}
i.e., \(\xi=(1\tens\uni^\cc_0\tens1)\mu^{(3)}:Ts\cc\tens Ts\cc\to
Ts\cc\) is a double \((1,1)\)\n-coderivation. Its the only
non-vanishing components are \(\sS{_X}\xi_{0,0}=\sS{_X}\uni^\cc_0\in
s\cc(X,X)\), \(X\in\Ob\cc\).

Since both \(\xi B_1\) and \(\nu\) are double \((1,1)\)\n-coderivations
of degree 0, the equation \(\xi B_1=\nu\) is equivalent to its
particular case \(\xi B_1\pr_1=\nu\pr_1\), i.e., for each \(n,m\ge0\)
    \ifx\chooseClass1
\begin{multline*}
\sum_{\substack{0\le p\le n\\ 0\le q\le m}}
(1^{\tens n-p}\tens\xi_{p,q}\tens1^{\tens m-q})b_{n-p+1+m-q}
\\
\hspace*{2em}
+\sum_{a+k+c=n}(1^{\tens a}\tens b_k\tens 1^{\tens c+m})\xi_{a+1+c,m}
\hfill
\\
\hspace*{2em}
+\sum_{u+t+v=m}(1^{\tens n+u}\tens b_t\tens 1^{\tens v})
\xi_{n,u+1+v}=\nu_{n,m}:
\hfill
\\
T^ns\cc\tens T^ms\cc\to s\cc.
\end{multline*}
    \else
\begin{multline*}
\sum_{\substack{0\le p\le n\\ 0\le q\le m}}
(1^{\tens n-p}\tens\xi_{p,q}\tens1^{\tens m-q})b_{n-p+1+m-q}
+\sum_{a+k+c=n}(1^{\tens a}\tens b_k\tens 1^{\tens c+m})\xi_{a+1+c,m}
\\
+\sum_{u+t+v=m}(1^{\tens n+u}\tens b_t\tens 1^{\tens v})
\xi_{n,u+1+v}=\nu_{n,m}:
T^ns\cc\tens T^ms\cc\to s\cc.
\end{multline*}
    \fi
It reduces to the the equation
\[
(1^{\tens n}\tens\uni^\cc_0\tens1^{\tens m})b_{n+1+m} =\nu_{n,m}:
T^ns\cc\tens T^ms\cc\to s\cc,
\]
which holds true, since $\uni^\cc_0$ is a strict unit.
\end{proof}

Note that the maps \(\nu_n\), \(\xi_n\) obey the following relations:
\begin{equation}\label{equ-xi-nu-recursive}
\xi_n=(\xi_{n-1}\tens1)\xi,\qquad
\nu_n =(1^{\tens n}\tens\eps) -(\nu_{n-1}\tens1), \qquad n\ge1.
\end{equation}
In particular, \(\xi_n\eps=0:(Ts\cc)^{\tens n+1}\to\kk\Ob\cc\), for
each \(n\ge1\), as \(\xi\eps=0\) by
equation~\eqref{equ-r-Delta-iterated}.

\begin{lemma}
The following equations hold true:
\begin{alignat}{3}
\xi_n\Delta_0
=\sum_{i=0}^n(1^{\tens i}\tens\Delta_0\tens 1^{\tens n-i})(\xi_i\tens\xi_{n-i}),
&\quad & n\ge0, \label{equ-xin-Delta}
\\
\xi_nb-(-)^n\sum_{i=0}^n(1^{\tens i}\tens b\tens 1^{\tens n-i})\xi_n
=\nu_n\xi_{n-1}, &\quad & n\ge1. \label{equ-xin-b}
\end{alignat}
\end{lemma}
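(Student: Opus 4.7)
I plan to prove both identities by induction on $n$, exploiting the recursions~\eqref{equ-xi-nu-recursive} together with \lemref{lem-nu-double-coder} and \lemref{lem-xi-double-coder}, which supply the base identities $\xi\Delta_0 = (\Delta_0\tens1)(1\tens\xi) + (1\tens\Delta_0)(\xi\tens1)$ and $\xi B_1 = \nu$.

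For~\eqref{equ-xin-Delta}, the base case $n=0$ is immediate since $\xi_0 = \id$. In the inductive step I will write $\xi_n\Delta_0 = (\xi_{n-1}\tens1)\xi\Delta_0$ and expand via the double coderivation property of $\xi$. The first resulting summand $(\xi_{n-1}\Delta_0\tens1)(1\tens\xi)$ rewrites, using the inductive hypothesis together with the identity $(\xi_{n-1-i}\tens1)\xi = \xi_{n-i}$, as $\sum_{i=0}^{n-1}(1^{\tens i}\tens\Delta_0\tens1^{\tens n-i})(\xi_i\tens\xi_{n-i})$. The remaining summand $(\xi_{n-1}\tens\Delta_0)(\xi\tens1)$ equals $(1^{\tens n}\tens\Delta_0)(\xi_n\tens\xi_0)$, which contributes the $i=n$ term.

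For~\eqref{equ-xin-b}, I take $n=1$ as the base case, where the statement is exactly $\xi B_1 = \nu$. For the inductive step with $n\ge 2$, I substitute $\xi_n = (\xi_{n-1}\tens1)\xi$ and invoke $\xi b = \nu - (1\tens b + b\tens1)\xi$ to obtain
\[
\xi_n b = (\xi_{n-1}\tens1)\nu - (\xi_{n-1}\tens b)\xi - (\xi_{n-1}b\tens1)\xi.
\]
The first term collapses to $\xi_{n-1}\tens\eps$ because $\xi_{n-1}\eps=0$ for $n-1\ge1$. The Koszul sign rule yields $(\xi_{n-1}\tens b)\xi = (-)^{n-1}(1^{\tens n}\tens b)\xi_n$. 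The third term is expanded by the inductive hypothesis applied to $\xi_{n-1}b$, producing $(-)^{n-1}\sum_{i=0}^{n-1}(1^{\tens i}\tens b\tens1^{\tens n-i})\xi_n$ together with $(\nu_{n-1}\tens1)\xi_{n-1}$. Assembling these pieces and using $\nu_n\xi_{n-1} = \xi_{n-1}\tens\eps - (\nu_{n-1}\tens1)\xi_{n-1}$ (which comes from the recursion for $\nu_n$) delivers the desired identity.

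The main obstacle will be the careful bookkeeping of Koszul signs, in particular the sign $(-)^{n-1}$ arising from commuting the degree $1$ differential $b$ past the degree $-(n-1)$ morphism $\xi_{n-1}$ in tensor products. Once this sign is correctly recorded, both identities reduce to straightforward matching of summands.
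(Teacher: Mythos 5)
Your proposal is correct and takes essentially the same route as the paper's own proof: induction on $n$ using the recursions~\eqref{equ-xi-nu-recursive}, the double coderivation property of $\xi$ and the identity $\xi B_1=\nu$ from \lemref{lem-xi-double-coder}, with the key Koszul sign $(\xi_{n-1}\tens b)\xi=(-)^{n-1}(1^{\tens n}\tens b)\xi_n$ and the cancellation via $\nu_n\xi_{n-1}=(1^{\tens n}\tens\eps)\xi_{n-1}-(\nu_{n-1}\tens1)\xi_{n-1}$ recorded exactly as in the paper. The only difference is organizational (you expand $\xi_n b$ alone and move terms, while the paper manipulates the full left-hand side of~\eqref{equ-xin-b}, cancelling the two $(\xi_{n-1}\tens b)\xi$ terms), which amounts to the same computation.
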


\begin{proof}
Let us prove~\eqref{equ-xin-Delta}. The proof is by induction on $n$.
The case $n=0$ is trivial. Let $n\ge1$. By~\eqref{equ-xi-nu-recursive}
and \lemref{lem-xi-double-coder},
\[
\xi_n\Delta_0=(\xi_{n-1}\tens1)\xi\Delta_0
=(\xi_{n-1}\Delta_0\tens1)(1\tens\xi)+
(\xi_{n-1}\tens\Delta_0)(\xi\tens1).
\]
By induction hypothesis,
\[
\xi_{n-1}\Delta_0=\sum_{i=0}^{n-1}
 (1^{\tens i}\tens\Delta_0\tens1^{\tens n-1-i})(\xi_i\tens\xi_{n-1-i}),
\]
therefore
\begin{multline*}
\xi_n\Delta_0=
\sum_{i=0}^{n-1}(1^{\tens i}\tens\Delta_0\tens 1^{\tens n-i})
(\xi_i\tens\xi_{n-1-i}\tens1)(1\tens\xi)
    \ifx\chooseClass1
\\
\hfill
    \fi
+(1^{\tens n}\tens\Delta_0)((\xi_{n-1}\tens1)\xi\tens1)
\quad
\\
\hskip\multlinegap\hphantom{\xi_n\Delta_0}
=\sum_{i=0}^n(1^{\tens i}\tens\Delta_0\tens1^{\tens n-i})
(\xi_i\tens\xi_{n-i}),\hfill
\end{multline*}
since $(\xi_{n-1-i}\tens1)\xi=\xi_{n-i}$ if $0\le i\le n-1$.

Let us prove \eqref{equ-xin-b}. The proof is by induction on $n$. The
case \(n=1\) follows from \lemref{lem-xi-double-coder}. Let $n\ge2$. By
\eqref{equ-xi-nu-recursive} and \lemref{lem-xi-double-coder},
\begin{multline*}
\xi_n b-(-)^n\sum_{i=0}^n(1^{\tens i}\tens b\tens 1^{\tens n-i})\xi_n
\\
\hskip\multlinegap
=(\xi_{n-1}\tens1)\xi b
-(-)^n\sum_{i=0}^{n-1}((1^{\tens i}\tens b\tens1^{\tens n-1-i})\xi_{n-1}\tens1)\xi
    \ifx\chooseClass1
\hfill
\\
\hfill
    \fi
-(-)^n(1^{\tens n}\tens b)(\xi_{n-1}\tens1)\xi
\quad
\\
\hskip\multlinegap
=-(\xi_{n-1}b\tens1)\xi-(\xi_{n-1}\tens b)\xi+(\xi_{n-1}\tens1)\nu
\hfill
\\
\hfill
+(-)^{n-1}\sum_{i=0}^{n-1}((1^{\tens i}\tens b\tens1^{\tens n-1-i})
\xi_{n-1}\tens1)\xi
+(\xi_{n-1}\tens b)\xi
\quad
\\
\hskip\multlinegap
=(\xi_{n-1}\tens1)\nu
    \ifx\chooseClass1
\hfill
\\
    \fi
-\biggl(\Bigl[\xi_{n-1}b
-(-)^{n-1}\sum_{i=0}^{n-1}(1^{\tens i}\tens b\tens1^{\tens n-1-i})
\xi_{n-1}\Bigr]\tens1\biggr)\xi.
\hfill
\end{multline*}
By induction hypothesis
\[
\xi_{n-1}b
 -(-)^{n-1}\sum_{i=0}^{n-1}(1^{\tens i}\tens b\tens1^{\tens
 n-1-i})\xi_{n-1} =\nu_{n-1}\xi_{n-2},
\]
therefore
\[
\xi_n b-(-)^n\sum_{i=0}^n(1^{\tens i}\tens b\tens 1^{\tens n-i})\xi_n
=(\xi_{n-1}\tens1)\nu-(\nu_{n-1}\xi_{n-2}\tens1)\xi.
\]
Since by \eqref{equ-xi-nu-recursive},
    \ifx\chooseClass1
\begin{multline*}
(\xi_{n-1}\tens1)\nu-(\nu_{n-1}\xi_{n-2}\tens1)\xi
\\
=(\xi_{n-1}\tens\eps)-(\xi_{n-1}\eps\tens1)-(\nu_{n-1}\tens1)\xi_{n-1}
\\
=(1^{\tens n}\tens\eps)\xi_{n-1}-(\nu_{n-1}\tens1)\xi_{n-1}
=\nu_n\xi_{n-1},
\end{multline*}
    \else
\begin{align*}
(\xi_{n-1}\tens1)\nu-(\nu_{n-1}\xi_{n-2}\tens1)\xi
&=(\xi_{n-1}\tens\eps)-(\xi_{n-1}\eps\tens1)-(\nu_{n-1}\tens1)\xi_{n-1}
\\
&=(1^{\tens n}\tens\eps)\xi_{n-1}-(\nu_{n-1}\tens1)\xi_{n-1}
=\nu_n\xi_{n-1},
\end{align*}
    \fi
equation~\eqref{equ-xin-b} is proven.
\end{proof}

\section{An augmented differential graded cocategory}
\label{sec-proof-unital-implies-unital-structure}

Let now $\cc=\ca^\su$, where \(\ca\) is an \ainf-category. There is an
isomorphism of graded $\kk$-quivers, identity on objects:
\[
\zeta:\bigoplus_{n\ge0}(Ts\ca)^{\tens n+1}[n]\to Ts\ca^\su.
\]
The morphism $\zeta$ is the sum of morphisms
    \ifx\chooseClass1
\begin{multline}\label{equ-def-zeta}
\zeta_n=\bigl[(Ts\ca)^{\tens n+1}[n] \rto{s^{-n}} (Ts\ca)^{\tens n+1}
\\
\xhookrightarrow{e^{\tens n+1}} (Ts\ca^\su)^{\tens n+1} \rto{\xi_n}
Ts\ca^\su\bigr],
\end{multline}
    \else
\begin{equation}\label{equ-def-zeta}
\zeta_n=\bigl[(Ts\ca)^{\tens n+1}[n] \rto{s^{-n}} (Ts\ca)^{\tens n+1}
\xhookrightarrow{e^{\tens n+1}} (Ts\ca^\su)^{\tens n+1} \rto{\xi_n}
Ts\ca^\su\bigr],
\end{equation}
    \fi
where \(e:\ca\hookrightarrow\ca^\su\) is the natural embedding. The
graded quiver
\[
\ce\defeq\bigoplus_{n\ge0}(Ts\ca)^{\tens n+1}[n]
\]
admits a unique structure of an augmented differential graded
cocategory such that $\zeta$ becomes an isomorphism of augmented
differential graded cocategories. The comultiplication
$\wt\Delta:\ce\to\ce\tens\ce$ is found from the equation
    \ifx\chooseClass1
\begin{multline*}
\bigl[\ce \rto{\zeta} Ts\ca^\su \rto{\Delta_0}
Ts\ca^\su\tens Ts\ca^\su\bigr]\\
=\bigl[\ce \rto{\wt\Delta} \ce\tens\ce
\rto{\zeta\tens\zeta} Ts\ca^\su\tens Ts\ca^\su\bigr].
\end{multline*}
    \else
\begin{equation*}
\bigl[\ce \rto{\zeta} Ts\ca^\su \rto{\Delta_0}
Ts\ca^\su\tens Ts\ca^\su\bigr]
=\bigl[\ce \rto{\wt\Delta} \ce\tens\ce
\rto{\zeta\tens\zeta} Ts\ca^\su\tens Ts\ca^\su\bigr].
\end{equation*}
    \fi
Restricting the left hand side of the equation to the summand
$(Ts\ca)^{\tens n+1}[n]$ of \(\ce\), we obtain
    \ifx\chooseClass1
\begin{multline*}
\zeta_n\Delta_0=s^{-n}e^{\tens n+1}\xi_n\Delta_0
\\
\hskip\multlinegap\hphantom{\zeta_n\Delta_0}
=s^{-n}\sum_{i=0}^n(e^{\tens i}\tens e\Delta_0\tens e^{\tens n-i})
(\xi_i\tens\xi_{n-i}):
\hfill
\\
(Ts\ca)^{\tens n+1}[n]\to Ts\ca^\su\tens Ts\ca^\su,
\end{multline*}
    \else
\begin{align*}
\zeta_n\Delta_0 &=s^{-n}e^{\tens n+1}\xi_n\Delta_0
\\
&=s^{-n}\sum_{i=0}^n(e^{\tens i}\tens e\Delta_0\tens e^{\tens n-i})
(\xi_i\tens\xi_{n-i}):
(Ts\ca)^{\tens n+1}[n]\to Ts\ca^\su\tens Ts\ca^\su,
\end{align*}
    \fi
by equation~\eqref{equ-xin-Delta}. Since $e$ is a morphism of augmented
graded cocategories, it follows that
\begin{multline*}
\zeta_n\Delta_0
=s^{-n}\sum_{i=0}^n(1^{\tens i}\tens\Delta_0\tens1^{\tens n-i})
(e^{\tens i+1}\xi_i\tens e^{\tens n-i+1}\xi_{n-i})
\\
\hskip\multlinegap\hphantom{\zeta_n\Delta_0}
=s^{-n}\sum_{i=0}^n(1^{\tens i}\tens\Delta_0\tens1^{\tens n-i})
(s^i\tens s^{n-i})(\zeta_i\tens\zeta_{n-i}):
\hfill
\\
(Ts\ca)^{\tens n+1}[n]\to Ts\ca^\su\tens Ts\ca^\su.
\end{multline*}
This implies the following formula for $\wt\Delta$:
\begin{multline}\label{equ-tilde-Delta-explicit}
\wt\Delta|_{(Ts\ca)^{\tens n+1}[n]}
=s^{-n}\sum_{i=0}^n(1^{\tens i}\tens\Delta_0\tens1^{\tens n-i})(s^i\tens s^{n-i}):
\\
(Ts\ca)^{\tens n+1}[n]\to\bigoplus_{i=0}^n(Ts\ca)^{\tens i+1}[i]
\bigotimes(Ts\ca)^{\tens n-i+1}[n-i].
\end{multline}
The counit of $\ce$ is
 $\wt\eps=[\ce \rto{\pr_0} Ts\ca \rto\eps \kk\Ob\ca=\kk\Ob\ce]$. The
augmentation of \(\ce\) is
\(\wt\eta=[\kk\Ob\ce=\kk\Ob\ca\rto{\eta}Ts\ca\rto{\inj_0}\ce]\). The
differential \(\wt b:\ce\to\ce\) is found from the following equation:
\[
\bigl[\ce \rto{\zeta} Ts\ca^\su \rto{b} Ts\ca^\su\bigr]
=\bigl[\ce\rto{\wt b} \ce \rto\zeta Ts\ca^\su\bigr].
\]
Let \(\wt b_{n,m}:(Ts\ca)^{\tens n+1}[n]\to(Ts\ca)^{\tens m+1}[m]\),
\(n,m\ge0\), denote the matrix coefficients of \(\wt b\). Restricting
the left hand side of the above equation to the summand
\((Ts\ca)^{\tens n+1}[n]\) of \(\ce\), we obtain
    \ifx\chooseClass1
\begin{multline*}
\zeta_n b=s^{-n}e^{\tens n+1}\xi_n b
\\
\hskip\multlinegap\hphantom{\zeta_n b}
=s^{-n}e^{\tens
n+1}\nu_n\xi_{n-1} +(-)^ns^{-n}\sum_{i=0}^n(e^{\tens i}\tens eb\tens
e^{\tens n-i})\xi_n:
\hfill
\\
(Ts\ca)^{\tens n+1}[n]\to Ts\ca^\su,
\end{multline*}
    \else
\begin{align*}
\zeta_n b &=s^{-n}e^{\tens n+1}\xi_n b
\\
&=s^{-n}e^{\tens
n+1}\nu_n\xi_{n-1} +(-)^ns^{-n}\sum_{i=0}^n(e^{\tens i}\tens eb\tens
e^{\tens n-i})\xi_n:
(Ts\ca)^{\tens n+1}[n]\to Ts\ca^\su,
\end{align*}
    \fi
by equation~\eqref{equ-xin-b}. Since \(e\) preserves the counit, it
follows that
\[
e^{\tens n+1}\nu_n=\nu_n e^{\tens n}:(Ts\ca)^{\tens
n+1}\to(Ts\ca^\su)^{\tens n}.
\]
Furthermore, \(e\) commutes with the differential \(b\), therefore
    \ifx\chooseClass1
\begin{multline*}
\zeta_n b=s^{-n}\nu_n s^{n-1}(s^{-(n-1)}e^{\tens n}\xi_{n-1})
\\
\hfill
+(-)^ns^{-n}\sum_{i=0}^n(1^{\tens i}\tens b\tens 1^{\tens n-i})
s^n(s^{-n}e^{\tens n+1}\xi_n)
\quad
\\
\hskip\multlinegap\hphantom{\zeta_n b}
=s^{-n}\nu_n s^{n-1}\zeta_{n-1} +(-)^ns^{-n}\sum_{i=0}^n(1^{\tens i}
\tens b\tens 1^{\tens n-i})s^n\zeta_n:
\hfill
\\
(Ts\ca)^{\tens n+1}[n]\to Ts\ca^\su.
\end{multline*}
    \else
\begin{align*}
\zeta_n b &=s^{-n}\nu_n s^{n-1}(s^{-(n-1)}e^{\tens n}\xi_{n-1})
+(-)^ns^{-n}\sum_{i=0}^n(1^{\tens i}\tens b\tens 1^{\tens n-i})
s^n(s^{-n}e^{\tens n+1}\xi_n)
\\
&=s^{-n}\nu_n s^{n-1}\zeta_{n-1} +(-)^ns^{-n}\sum_{i=0}^n(1^{\tens i}
\tens b\tens 1^{\tens n-i})s^n\zeta_n:
(Ts\ca)^{\tens n+1}[n]\to Ts\ca^\su.
\end{align*}
    \fi
We conclude that
    \ifx\chooseClass1
\begin{multline}
\wt b_{n,n}=(-)^ns^{-n}\sum_{i=0}^n(1^{\tens i}\tens b\tens1^{\tens n-i})s^n:
\\
(Ts\ca)^{\tens n+1}[n]\to(Ts\ca)^{\tens n+1}[n],
\label{equ-tilde-b-explicit-n-n}
\end{multline}
    \else
\begin{equation}
\wt b_{n,n}=(-)^ns^{-n}\sum_{i=0}^n(1^{\tens i}\tens b\tens1^{\tens n-i})s^n:
(Ts\ca)^{\tens n+1}[n]\to(Ts\ca)^{\tens n+1}[n],
\label{equ-tilde-b-explicit-n-n}
\end{equation}
    \fi
for \(n\ge 0\), and
\begin{equation}
\wt b_{n,n-1}=s^{-n}\nu_ns^{n-1}:
(Ts\ca)^{\tens n+1}[n]\to(Ts\ca)^{\tens n}[n-1],
\label{equ-tilde-b-explicit-n-n-1}
\end{equation}
for \(n\ge1\), are the only non-vanishing matrix coefficients of \(\wt
b\).

Let \(g:\ce\to Ts\cb\) be a morphism of augmented differential graded
cocategories, and let \(g_n:(Ts\ca)^{\tens n+1}[n]\to Ts\cb\) be its
components. By formula~\eqref{equ-tilde-Delta-explicit}, the equation
\(g\Delta_0=\wt\Delta(g\tens g)\) is equivalent to the system of
equations
\begin{multline*}
g_n\Delta_0=s^{-n}\sum_{i=0}^n(1^{\tens i}\tens\Delta_0\tens 1^{\tens
n-i}) (s^ig_i\tens s^{n-i}g_{n-i}):
\\
(Ts\ca)^{\tens n+1}[n]\to Ts\cb\tens Ts\cb,\quad n\ge 0.
\end{multline*}
The equation \(g\eps=\wt\eps(\kk\Ob g)\) is equivalent to the equations
\(g_0\eps=\eps(\kk\Ob g_0)\), \(g_n\eps=0\), \(n\ge1\). The equation
\(\wt\eta g=(\kk\Ob g)\eta\) is equivalent to the equation \(\eta
g_0=(\kk\Ob g_0)\eta\). By formulas~\eqref{equ-tilde-b-explicit-n-n}
and \eqref{equ-tilde-b-explicit-n-n-1}, the equation \(gb=\wt b g\) is
equivalent to \(g_0b=bg_0:Ts\ca\to Ts\cb\) and
\begin{multline*}
g_nb=(-)^ns^{-n}\sum_{i=0}^n(1^{\tens i}\tens b\tens 1^{\tens n-i})
s^ng_n+s^{-n}\nu_ns^{n-1}g_{n-1}:
\\
(Ts\ca)^{\tens n+1}[n]\to Ts\cb, \quad n\ge1.
\end{multline*}
Introduce \(\kk\)\n-linear maps \(\phi_n=s^ng_n:(Ts\ca)^{\tens
n+1}(X,Y)\to Ts\cb(Xg,Yg)\) of degree \(-n\), \(X,Y\in\Ob\ca\),
\(n\ge0\). The above equations take the following form:
    \ifx\chooseClass1
\begin{multline}
\phi_n\Delta_0=\sum_{i=0}^n(1^{\tens i}\tens\Delta_0\tens 1^{\tens n-i})
(\phi_i\tens\phi_{n-i}):
\\
(Ts\ca)^{\tens n+1}\to Ts\cb\tens Ts\cb,
\label{equ-compat-with-Delta}
\end{multline}
    \else
\begin{equation}
\phi_n\Delta_0=\sum_{i=0}^n(1^{\tens i}\tens\Delta_0\tens 1^{\tens n-i})
(\phi_i\tens\phi_{n-i}):
(Ts\ca)^{\tens n+1}\to Ts\cb\tens Ts\cb,
\label{equ-compat-with-Delta}
\end{equation}
    \fi
for \(n\ge 1\);
    \ifx\chooseClass1
\begin{multline}
\phi_nb=(-)^n\sum_{i=0}^n(1^{\tens i}\tens b\tens 1^{\tens n-i})
\phi_n+\nu_n\phi_{n-1}:
\\
(Ts\ca)^{\tens n+1}\to Ts\cb,\label{equ-compat-with-b}
\end{multline}
    \else
\begin{equation}
\phi_nb=(-)^n\sum_{i=0}^n(1^{\tens i}\tens b\tens 1^{\tens n-i})
\phi_n+\nu_n\phi_{n-1}:
(Ts\ca)^{\tens n+1}\to Ts\cb,\label{equ-compat-with-b}
\end{equation}
    \fi
for \(n\ge1\);
\begin{gather}
\phi_0\Delta_0=\Delta_0(\phi_0\tens\phi_0),\quad
\phi_0\eps=\eps,\quad \phi_0b=b\phi_0,
\\
\phi_n\eps=0,\quad n\ge 1. \label{equ-compat-with-epsilon}
\end{gather}
Summing up, we conclude that morphisms of augmented differential graded
cocategories \(\ce\to Ts\cb\) are in bijection with collections
consisting of a morphism of augmented differential graded cocategories
\(\phi_0:Ts\ca\to Ts\cb\) and of \(\kk\)\n-linear maps
\(\phi_n:(Ts\ca)^{\tens n+1}(X,Y)\to Ts\cb(X\phi_0,Y\phi_0)\) of degree
\(-n\), \(X,Y\in\Ob\ca\), \(n\ge1\), such that
equations~\eqref{equ-compat-with-Delta}, \eqref{equ-compat-with-b}, and
\eqref{equ-compat-with-epsilon} hold true.

In particular, \ainf-functors \(f:\ca^\su\to\cb\), which are augmented
differential graded cocategory morphisms \(Ts\ca^\su\to Ts\cb\), are in
bijection with morphisms
 \(g=\zeta f:\ce\to Ts\cb\) of augmented differential graded
cocategories. With the above notation, we may say that to give an
\ainf-functor \(f:\ca^\su\to\cb\) is the same as to give an
\ainf-functor \(\phi_0:\ca\to\cb\) and a system of \(\kk\)\n-linear
maps \(\phi_n:(Ts\ca)^{\tens n+1}(X,Y)\to Ts\cb(X\phi_0,Y\phi_0)\) of
degree \(-n\), \(X,Y\in\Ob\ca\), \(n\ge1\), such that
equations~\eqref{equ-compat-with-Delta}, \eqref{equ-compat-with-b} and
\eqref{equ-compat-with-epsilon} hold true.

\begin{proposition}\label{prop-existence-unital-structure}
The following conditions are equivalent.
\begin{itemize}
\item[(a)] There exists an \ainf-functor \(U:\ca^\su\to\ca\) such that
\[
\bigl[\ca \xhookrightarrow{e} \ca^\su \rto{U} \ca\bigr]=\id_\ca.
\]
\item[(b)] There exists a double \((1,1)\)\n-coderivation
\(\phi:Ts\ca\tens Ts\ca\to Ts\ca\) of degree \(-1\) such that
\(\phi B_1=\nu\).
\end{itemize}
\end{proposition}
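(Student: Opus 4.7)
The plan is to leverage the dictionary built up earlier in this section: the paragraphs preceding the proposition establish a bijection between \ainf-functors \(f:\ca^\su\to\cb\) and collections \((\phi_0,\phi_1,\phi_2,\dots)\), where \(\phi_0:\ca\to\cb\) is an \ainf-functor and the \(\phi_n:(Ts\ca)^{\tens n+1}\to Ts\cb\) are \(\kk\)\n-linear maps of degree \(-n\) subject to equations~\eqref{equ-compat-with-Delta}, \eqref{equ-compat-with-b}, and~\eqref{equ-compat-with-epsilon}. Specializing to \(\cb=\ca\), the component \(\phi_0\) reads off the composition \(eU:Ts\ca\to Ts\ca\), so the condition \(eU=\id_\ca\) translates into \(\phi_0=\id_\ca\).

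For the implication (a)\(\Rightarrow\)(b), I simply take \(\phi\defeq\phi_1\). Equation~\eqref{equ-compat-with-Delta} at \(n=1\), combined with \(\phi_0=\id\), reads
\[
\phi\Delta_0=(\Delta_0\tens 1)(1\tens\phi)+(1\tens\Delta_0)(\phi\tens 1),
\]
which is exactly the statement that \(\phi\) is a double \((1,1)\)\n-coderivation. Equation~\eqref{equ-compat-with-b} at \(n=1\) becomes \(\phi b+(1\tens b+b\tens 1)\phi=\nu_1\phi_0=\nu\), which is \(\phi B_1=\nu\) since \(\deg\phi=-1\).

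For the implication (b)\(\Rightarrow\)(a), I construct \(U\) by assembling its associated collection. Set \(\phi_0=\id_\ca\), \(\phi_1=\phi\), and recursively \(\phi_n\defeq(\phi_{n-1}\tens 1)\phi\) for \(n\ge 2\), mimicking the relation \(\xi_n=(\xi_{n-1}\tens 1)\xi\) from~\eqref{equ-xi-nu-recursive}. The counit equation~\eqref{equ-compat-with-epsilon} reduces to \(\phi\eps=0\), which is the case \(l=0\) of~\eqref{equ-r-Delta-iterated} (empty right hand side) applied to the double coderivation \(\phi\). Equations~\eqref{equ-compat-with-Delta} and~\eqref{equ-compat-with-b} are proved by induction on \(n\), reproducing almost verbatim the proofs of~\eqref{equ-xin-Delta} and~\eqref{equ-xin-b}: the only properties of \(\xi\) used in those arguments are that it is a double \((1,1)\)\n-coderivation with \(\xi B_1=\nu\), and these are precisely the hypotheses we have on \(\phi\).

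The main obstacle is the bookkeeping in the induction step for~\eqref{equ-compat-with-b}: one must rewrite \((\phi_{n-1}\tens 1)\phi b\) using the hypothesis \(\phi b=-(1\tens b+b\tens 1)\phi+\nu\), invoke the inductive form of~\eqref{equ-compat-with-b} for \(\phi_{n-1}\), and use the recursive identity for \(\nu_n\) from~\eqref{equ-xi-nu-recursive} to reassemble the terms. Since the computation has exactly the shape of the proof of~\eqref{equ-xin-b} with \(\xi_k\) replaced by \(\phi_k\) throughout, no new sign or combinatorial issues arise, and the induction goes through to produce the desired \ainf-functor \(U:\ca^\su\to\ca\) with \(eU=\id_\ca\).
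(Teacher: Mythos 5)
Your proposal is correct and takes essentially the same route as the paper's own proof: for (a)\(\Rightarrow\)(b) you extract \(\phi=\phi_1\) from the bijection and read off equations~\eqref{equ-compat-with-Delta} and~\eqref{equ-compat-with-b} at \(n=1\), and for (b)\(\Rightarrow\)(a) you define \(\phi_0=\id_\ca\), \(\phi_n=(\phi_{n-1}\tens1)\phi\) recursively and run exactly the inductions the paper uses for~\eqref{equ-xin-Delta} and~\eqref{equ-xin-b}, which indeed use only that \(\xi\) is a double \((1,1)\)\n-coderivation with \(\xi B_1=\nu\). Your sign bookkeeping at \(n=1\), namely \(\phi b+(1\tens b+b\tens1)\phi=\nu\), is the correct reading of~\eqref{equ-compat-with-b} and is in fact more careful than the paper's displayed formula at that point.
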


\begin{proof}
(a)\(\Rightarrow\)(b) Let \(U:\ca^\su\to\ca\) be an \ainf-functor such
that \(eU=\id_\ca\), in particular
\(\Ob U=\id:\Ob\ca^\su=\Ob\ca\to\Ob\ca\). It gives rise to the family
of \(\kk\)\n-linear maps
\(\phi_n=s^n\zeta_n U:(Ts\ca)^{\tens n+1}(X,Y)\to Ts\cb(X,Y)\) of
degree \(-n\), \(X,Y\in\Ob\ca\), \(n\ge0\), that satisfy
equations~\eqref{equ-compat-with-Delta}, \eqref{equ-compat-with-b} and
\eqref{equ-compat-with-epsilon}. In particular, \(\phi_0=eU=\id_\ca\).
Equations~\eqref{equ-compat-with-Delta} and \eqref{equ-compat-with-b}
for \(n=1\) read as follows:
    \ifx\chooseClass1
\begin{multline*}
\phi_1\Delta_0=(\Delta_0\tens1)(\phi_0\tens\phi_1)
+(1\tens\Delta_0)(\phi_1\tens\phi_0)
\\
\hfill
=(\Delta_0\tens1)(1\tens\phi_1)+(1\tens\Delta_0)(\phi_1\tens1),
\quad
\\
\hskip\multlinegap
\phi_1b=(1\tens
b+b\tens1)\phi_1+\nu_1\phi_0 =(1\tens b+b\tens1)\phi_1+\nu.
\hfill
\end{multline*}
    \else
\begin{align*}
\phi_1\Delta_0 &=(\Delta_0\tens1)(\phi_0\tens\phi_1)
+(1\tens\Delta_0)(\phi_1\tens\phi_0)
=(\Delta_0\tens1)(1\tens\phi_1)+(1\tens\Delta_0)(\phi_1\tens1),
\\
\phi_1b &=(1\tens b+b\tens1)\phi_1+\nu_1\phi_0 =(1\tens
b+b\tens1)\phi_1+\nu.
\end{align*}
    \fi
In other words, \(\phi_1\) is a double \((1,1)\)\n-coderivation of
degree \(-1\) and \(\phi_1B_1=\nu\).

(b)\(\Rightarrow\)(a) Let \(\phi:Ts\ca\tens Ts\ca\to Ts\ca\) be a
double \((1,1)\)-coderivation of degree \(-1\) such that \(\phi
B_1=\nu\). Define \(\kk\)\n-linear maps
\[
\phi_n:(Ts\ca)^{\tens n+1}(X,Y)\to Ts\ca(X,Y), \quad X,Y\in\Ob\ca,
\]
of degree \(-n\), \(n\ge0\), recursively via \(\phi_0=\id_\ca\) and
\(\phi_n=(\phi_{n-1}\tens1)\phi\), \(n\ge1\).  Let us show that \(\phi_n\) satisfy
equations \eqref{equ-compat-with-Delta}, \eqref{equ-compat-with-b} and
\eqref{equ-compat-with-epsilon}.
Equation~\eqref{equ-compat-with-epsilon} is obvious:
\(\phi_n\eps=(\phi_{n-1}\tens1)\phi\eps=0\) as \(\phi\eps=0\) by
\eqref{equ-r-Delta-iterated}. Let us prove
equation~\eqref{equ-compat-with-Delta} by induction. It holds for
\(n=1\) by assumption, since \(\phi_1=\phi\) is a double
\((1,1)\)\n-coderivation. Let \(n\ge2\). We have:
\begin{multline*}
\phi_n\Delta_0=(\phi_{n-1}\tens1)\phi_1\Delta_0
\\
\hskip\multlinegap\hphantom{\phi_n\Delta_0}
=(\phi_{n-1}\tens1)((\Delta_0\tens1)(1\tens\phi_1)+(1\tens\Delta_0)(\phi_1\tens1))
\hfill
\\
\hskip\multlinegap\hphantom{\phi_n\Delta_0}
=(\phi_{n-1}\Delta_0\tens1)(1\tens\phi_1)
    \ifx\chooseClass1
\hfill
\\
+(1^{\tens n}\tens\Delta_0)((\phi_{n-1}\tens1)\phi_1\tens1).
    \else
+(1^{\tens n}\tens\Delta_0)((\phi_{n-1}\tens1)\phi_1\tens1).
\hfill
    \fi
\end{multline*}
By induction hypothesis,
\[
\phi_{n-1}\Delta_0=\sum_{i=0}^{n-1}
 (1^{\tens i}\tens\Delta_0\tens1^{\tens n-1-i})
 (\phi_i\tens\phi_{n-1-i}),
\]
so that
\begin{multline*}
\phi_n\Delta_0=\sum_{i=0}^{n-1}
 (1^{\tens i}\tens\Delta_0\tens1^{\tens n-i})
 (\phi_i\tens\phi_{n-1-i}\tens1)(1\tens\phi_1)
\\
\hfill
+(1^{\tens n}\tens\Delta_0)((\phi_{n-1}\tens1)\phi_1\tens1)
\quad
\\
\hskip\multlinegap\hphantom{\phi_n\Delta_0}
=\sum_{i=0}^n(1^{\tens i}\tens\Delta_0\tens1^{\tens n-i}) (\phi_i\tens\phi_{n-i}),
\hfill
\end{multline*}
since $(\phi_{n-1-i}\tens1)\phi_1=\phi_{n-i}$, $0\le i\le n-1$.

Let us prove equation~\eqref{equ-compat-with-b} by induction. For
\(n=1\) it is equivalent to the equation \(\phi B_1=\nu\), which holds
by assumption. Let \(n\ge2\). We have:
\begin{multline*}
\phi_n b-(-)^n\sum_{i=0}^n(1^{\tens i}\tens b\tens 1^{\tens n-i})\phi_n
\\
\hskip\multlinegap
=(\phi_{n-1}\tens1)\phi b
-(-)^n\sum_{i=0}^{n-1}((1^{\tens i}\tens b\tens1^{\tens n-1-i})\phi_{n-1}\tens1)\phi
\hfill
\\
\hfill
-(-)^n(1^{\tens n}\tens b)(\phi_{n-1}\tens1)\phi
\quad
\\
\hskip\multlinegap
=-(\phi_{n-1}b\tens1)\phi-(\phi_{n-1}\tens b)\phi+(\phi_{n-1}\tens1)\nu
\hfill
\\
\hfill
+(-)^{n-1}\sum_{i=0}^{n-1}
 ((1^{\tens i}\tens b\tens1^{\tens n-1-i})\phi_{n-1}\tens1)\phi
+(\phi_{n-1}\tens b)\phi
\quad
\\
\hskip\multlinegap
=(\phi_{n-1}\tens1)\nu
    \ifx\chooseClass1
\hfill
\\
\hskip\multlinegap
    \fi
-\biggl(\Bigl[\phi_{n-1}b
-(-)^{n-1}\sum_{i=0}^{n-1}
(1^{\tens i}\tens b\tens1^{\tens n-1-i})\phi_{n-1}\Bigr]\tens1\biggr)\phi.
\hfill
\end{multline*}
By induction hypothesis,
\[
\phi_{n-1}b-(-)^{n-1}\sum_{i=0}^{n-1}(1^{\tens i}\tens
 b\tens1^{\tens n-1-i})\phi_{n-1}=\nu_{n-1}\phi_{n-2},
\]
therefore
    \ifx\chooseClass1
\begin{multline*}
\phi_n b-(-)^n\sum_{i=0}^n(1^{\tens i}\tens b\tens 1^{\tens n-i})\phi_n
\\
=(\phi_{n-1}\tens1)\nu-(\nu_{n-1}\phi_{n-2}\tens1)\phi.
\end{multline*}
    \else
\[ \phi_n b-(-)^n\sum_{i=0}^n(1^{\tens i}\tens b\tens 1^{\tens n-i})\phi_n
=(\phi_{n-1}\tens1)\nu-(\nu_{n-1}\phi_{n-2}\tens1)\phi.
\]
\fi
Since by \eqref{equ-xi-nu-recursive}
    \ifx\chooseClass1
\begin{multline*}
(\phi_{n-1}\tens1)\nu-(\nu_{n-1}\phi_{n-2}\tens1)\phi
\\
=(\phi_{n-1}\tens\eps)-(\phi_{n-1}\eps\tens1)-(\nu_{n-1}\tens1)\phi_{n-1}
\\
=(1^{\tens n}\tens\eps)\phi_{n-1} -(\nu_{n-1}\tens1)\phi_{n-1}
=\nu_n\phi_{n-1},
\end{multline*}
    \else
\begin{align*}
(\phi_{n-1}\tens1)\nu-(\nu_{n-1}\phi_{n-2}\tens1)\phi
&=(\phi_{n-1}\tens\eps)-(\phi_{n-1}\eps\tens1)-(\nu_{n-1}\tens1)\phi_{n-1}
\\
&=(1^{\tens n}\tens\eps)\phi_{n-1} -(\nu_{n-1}\tens1)\phi_{n-1}
=\nu_n\phi_{n-1},
\end{align*}
    \fi
and equation~\eqref{equ-compat-with-b} is proven.

The system of maps \(\phi_n\), \(n\ge0\), corresponds to an
\ainf-functor \(U:\ca^\su\to\ca\) such that \(\phi_n=s^n\zeta_n U\),
\(n\ge0\). In particular, \(eU=\phi_0=\id_\ca\).
\end{proof}

\begin{proposition}\label{prop-existence-double-coder}
Let \(\ca\) be a unital \ainf-category. There exists a double
\((1,1)\)\n-coderivation \(h:Ts\ca\tens Ts\ca\to Ts\ca\) of degree
\(-1\) such that \(hB_1=\nu\).
\end{proposition}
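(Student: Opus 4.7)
I plan to follow the template of the proof of Theorem~\ref{thm-homotopy-unital-category-unital}: reduce to a differential graded model and use contractibility of an appropriate Cone complex.

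By \cite[Corollary~A.12, Remark~A.13]{math.CT/0306018}, there exist a differential graded category $\cd$ with $\Ob\cd=\Ob\ca$ and a unital \ainf-equivalence $\phi:\ca\to\cd$ with $\Ob\phi=\id_{\Ob\ca}$. For the strictly unital $\cd$, the ``trivial'' double $(1,1)$-coderivation $h^\cd:Ts\cd\tens Ts\cd\to Ts\cd$ of degree $-1$ with sole non-vanishing component $\sS{_X}h^\cd_{0,0}=\sS{_X}\uni^\cd_0$ satisfies $h^\cd B_1=\nu^\cd$: this follows directly from the strict unit relations $(1\tens\uni^\cd_0)b_2=1$, $(\uni^\cd_0\tens1)b_2=-1$, and vanishing of $(\cdots\tens\uni^\cd_0\tens\cdots)b_n$ for $n\ge3$. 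Moreover, $\nu$ is natural: a direct computation from \eqref{eq-functor} and \eqref{equ-comp-psi-psi-r} yields $\nu^\ca\cdot\psi=(\psi\tens\psi)\cdot\nu^\cb$ in $\McD(\ca,\cb)(\psi,\psi)$ for any \ainf-functor $\psi:\ca\to\cb$; both sides have $(n,m)$-component equal to $\psi_{n+m}$ when $m=0$, $n\ge1$, to $-\psi_{n+m}$ when $n=0$, $m\ge1$, and to $0$ otherwise. Consequently $\nu^\ca\cdot\phi=(\phi\tens\phi)\cdot\nu^\cd=\bigl((\phi\tens\phi)h^\cd\bigr)B_1$ is a boundary in $\McD(\ca,\cd)(\phi,\phi)$.

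I would then construct the required $h\in\McD(\ca,\ca)(1,1)^{-1}$ by induction on the bidegree $(n,m)$, simultaneously with a degree $-2$ double $(\phi,\phi)$-coderivation $r\in\McD(\ca,\cd)(\phi,\phi)$ satisfying the coherence $h\cdot\phi-(\phi\tens\phi)\cdot h^\cd=rB_1$. Base cases: $h_{0,0}=\uni^\ca_0$ (the given unit), $h_{1,0}$ a right unit homotopy, $h_{0,1}$ the negative of a left unit homotopy, and $r_{0,0}=v_X$ with $v_Xb_1=\uni^\ca_0\phi_1-\uni^\cd_0$ (such $v_X$ exists by unitality of $\phi$). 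At the induction step for $(n,m)$, isolating the unknowns $h_{n,m}$, $r_{n,m}$ in the equations $[hB_1]_{n,m}=\nu^\ca_{n,m}$ and $[h\phi-(\phi\tens\phi)h^\cd-rB_1]_{n,m}=0$ reduces to a lifting problem $\partial(h_{n,m},r_{n,m})=(\omega_{n,m},\omega'_{n,m})$ in $\Cone(u)$, where
\[
u:\uCom\bigl(T^ns\ca(X,Y)\tens T^ms\ca(Y,Z),s\ca(X,Z)\bigr)
\to
\uCom\bigl(T^ns\ca(X,Y)\tens T^ms\ca(Y,Z),s\cd(X\phi,Z\phi)\bigr)
\]
is post-composition with $\phi_1$. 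Since $\phi$ is an \ainf-equivalence, $\phi_1$ is homotopy invertible, hence $u$ is a homotopy equivalence and $\Cone(u)$ is contractible by \cite[Lemma~B.1]{Lyu-AinfCat}. A computation parallel to the verification at the end of the proof of Theorem~\ref{thm-homotopy-unital-category-unital} shows that $(\omega_{n,m},\omega'_{n,m})$ is a cycle; contractibility produces the required lift.

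The main obstacle is the bookkeeping --- formulating the simultaneous equations for $(h,r)$ so that their obstructions assemble into a cycle in $\Cone(u)$ at every inductive step, for which the naturality of $\nu$ and the equation $h^\cd B_1=\nu^\cd$ are both needed. The analytic content (contractibility of $\Cone(u)$ via homotopy invertibility of $\phi_1$) is identical to that in the proof of Theorem~\ref{thm-homotopy-unital-category-unital}, and no new homological input is required.
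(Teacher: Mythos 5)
Your plan is essentially the paper's own proof: the same reduction to a differential graded model \(\cd\) with a unital \ainf{}equivalence \(\phi\), the same double coderivation (your \(h^\cd\) coincides, by freeness of coderivations in their components, with the \(\xi=(1\tens\uni^\cd_0\tens1)\mu^{(3)}\) of \lemref{lem-xi-double-coder}, and your naturality \((\phi\tens\phi)\nu^\cd=\nu^\ca\phi\) is exactly how the paper gets \(\iota B_1=\nu f\)), the same simultaneous system \(hB_1=\nu\), \(h\phi=(\phi\tens\phi)h^\cd+rB_1\) solved by induction on \(n+m\) via the contractible \(\Cone(u)\). One small correction: the paper takes only \(h_{0,0}=\uni^\ca_0\), \(r_{0,0}=v\) as base data and produces \(h_{1,0},h_{0,1}\) \emph{jointly} with \(r_{1,0},r_{0,1}\) at step \(t=1\) of the induction; prescribing \(h_{1,0},h_{0,1}\) in advance as arbitrary unit homotopies can obstruct the equation for \(r\) (the residual cycle \(\kappa_{1,0}-h_{1,0}\phi_1\) need not be a boundary in \(\uCom(N,s\cd)\) for every choice, since it represents \(\phi_{1*}\) of the class of the difference of two homotopies), so these should be outputs of the Cone argument rather than inputs.
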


\begin{proof}
Let $\ca$ be a unital \ainf-category. By
\cite[Corollary~A.12]{math.CT/0306018}, there exist a differential
graded category $\cd$ and an \ainf-equivalence $f:\ca\to\cd$. The
functor $f$ is unital by \cite[Corollary~8.9]{Lyu-AinfCat}. This means
that, for every object $X$ of $\ca$, there exists a $\kk$-linear map
$\sS{_X}v_0:\kk\to(s\cd)^{-2}(Xf,Xf)$ such that
$\sS{_X}\uni^{\ca}_0f_1=\sS{_{Xf}}\uni^\cd_0+\sS{_X}v_0b_1$. Here
$\sS{_{Xf}}\uni^\cd_0$ denotes the strict unit of the differential
graded category $\cd$.

By \lemref{lem-xi-double-coder},
\(\xi=(1\tens\uni^\cd_0\tens1)\mu^{(3)}:Ts\cd\tens Ts\cd\to Ts\cd\) is
a \((1,1)\)\n-coderivation of degree \(-1\). Let \(\iota\) denote the
double \((f,f)\)\n-coderivation \((f\tens f)\xi\) of degree \(-1\). By
\lemref{lem-xi-double-coder},
\[
\iota B_1=(f\tens f)(\xi B_1)=(f\tens f)\nu=\nu f.
\]
By \lemref{lem-nu-double-coder}, the equation \(\nu B_1=0\) holds true.
We conclude that the double coderivations \(\nu\in
\McD(\ca,\ca)(\id_\ca,\id_\ca)^0\) and
\(\iota\in\McD(\ca,\cd)(f,f)^{-1}\) satisfy the following equations:
\begin{align}
\nu B_1&=0,\label{equ-iB}\\
\iota B_1-\nu f&=0.\label{equ-jB-if}
\end{align}
We are going to prove that there exist double coderivations
$h\in\McD(\ca,\ca)(\id_\ca,\id_\ca)^{-1}$ and
$k\in\McD(\ca,\cd)(f,f)^{-2}$ such that the following equations hold
true:
\begin{align*}
hB_1&=\nu,\\
hf&=\iota+kB_1.
\end{align*}
Let us put $\sS{_X}h_{0,0}=\sS{_X}\uni^\ca_0$,
$\sS{_X}k_{0,0}=\sS{_X}v_0$, and construct the other components of $h$
and $k$ by induction. Given an integer $t\ge0$, assume that we have
already found components $h_{p,q}$, $k_{p,q}$ of the sought $h$, $k$,
for all pairs $(p,q)$ with $p+q<t$, such that the equations
    \ifx\chooseClass1
\begin{multline}
(hB_1-\nu)_{p,q}=0:
\\
s\ca(X_0,X_1)\tdt s\ca(X_{p+q-1},X_{p+q})\to
s\ca(X_0,X_{p+q}),\label{eq-h}
\end{multline}
\begin{multline}
(kB_1+\iota-hf)_{p,q}=0:
\\
s\ca(X_0,X_1)\tdt s\ca(X_{p+q-1},X_{p+q})\to
s\cd(X_0f,X_{p+q}f)\label{eq-k}
\end{multline}
    \else
\begin{gather}
(hB_1-\nu)_{p,q}=0:
s\ca(X_0,X_1)\tdt s\ca(X_{p+q-1},X_{p+q})\to
s\ca(X_0,X_{p+q}),\label{eq-h}
\\
(kB_1+\iota-hf)_{p,q}=0:
s\ca(X_0,X_1)\tdt s\ca(X_{p+q-1},X_{p+q})\to
s\cd(X_0f,X_{p+q}f)\label{eq-k}
\end{gather}
    \fi
are satisfied for all pairs $(p,q)$ with $p+q<t$. Introduce double
coderivations $\wt h\in \McD(\ca,\ca)(\id_\ca,\id_\ca)$ and $\wt
k\in\McD(\ca,\cd)(f,f)$ of degree $-1$ resp. $-2$ by their components:
$\wt h_{p,q}=h_{p,q}$, $\wt k_{p,q}=k_{p,q}$ for $p+q<t$, all the other
components vanish. Define a double $(1,1)$-coderivation $\lambda=\wt h
B_1-\nu$ of degree $0$ and a double $(f,f)$-coderivation $\kappa=\wt k
B_1+\iota-\wt h f$ of degree $-1$. Then $\lambda_{p,q}=0$,
$\kappa_{p,q}=0$ for all $p+q<t$. Let non-negative integers $n$, $m$
satisfy \(n+m=t\). The identity $\lambda B_1=0$ implies that
\[
\lambda_{n,m}b_1-\sum_{l=1}^{n+m}
(1^{\tens l-1}\tens b_1\tens 1^{\tens n+m-l})\lambda_{n,m}=0.
\]
The $(n,m)$-component of the identity $\kappa B_1+\lambda f=0$ gives
\[ \kappa_{n,m}b_1 +\sum_{l=1}^{n+m}
(1^{\tens l-1}\tens b_1\tens 1^{\tens n+m-l})\kappa_{n,m}
+\lambda_{n,m}f_1=0.
\]
The chain map $f_1:\ca(X_0,X_{n+m})\to s\cd(X_0f,X_{n+m}f)$ is homotopy
invertible as $f$ is an \ainf-equivalence. Hence, the chain map
\(\Phi\) given by
\begin{align*}
\ucom^\bull(N,s\ca(X_0,X_{n+m}))&\to\ucom^\bull(N,s\cd(X_0f,X_{n+m}f)),
\\
\lambda&\mapsto\lambda f_1,
\end{align*}
is homotopy invertible for each complex of $\kk$-modules $N$, in
particular, for $N=s\ca(X_0,X_1)\tdt s\ca(X_{n+m-1},X_{n+m})$.
Therefore, the complex $\Cone(\Phi)$ is contractible, e.g. by
\cite[Lemma~B.1]{Lyu-AinfCat}. Consider the element
\((\lambda_{n,m},\kappa_{n,m})\) of
\[
\ucom^0(N,s\ca(X_0,X_{n+m}))\oplus\ucom^{-1}(N,\cd(X_0f,X_{n+m}f)).
\]
The above direct sum coincides with \(\Cone^{-1}(\Phi)\). The equations
$-\lambda_{n,m} d=0$, $\kappa_{n,m} d+\lambda_{n,m}\Phi=0$ imply that
\((\lambda_{n,m},\kappa_{n,m})\) is a cycle in the complex
\(\Cone(\Phi)\). Due to acyclicity of $\Cone(\Phi)$,
\((\lambda_{n,m},\kappa_{n,m})\) is a boundary of some element
\((h_{n,m},-k_{n,m})\) of \(\Cone^{-2}(\Phi)\), i.e., of
\[
\ucom^{-1}(N,s\ca(X_0,X_{n+m}))\oplus\ucom^{-2}(N,\cd(X_0f,X_{n+m}f)).
\]
Thus, $-k_{n,m}d+h_{n,m}f_1=\kappa_{n,m}$, $-h_{n,m}d=\lambda_{n,m}$.
These equations can be written as follows:
    \ifx\chooseClass1
\begin{multline*}
-h_{n,m}b_1-\sum_{u+1+v=n+m}(1^{\tens u}\tens b_1\tens 1^{\tens v})h_{n,m}
\\
\hfill
=(\wt h B_1-\nu)_{n,m},
\quad
\\
\hskip\multlinegap
-k_{n,m}b_1+\sum_{u+1+v=n+m}(1^{\tens u}\tens b_1\tens 1^{\tens v})k_{n,m}
+h_{n,m}f_1
\hfill
\\
=(\wt kB_1+\iota-\wt hf)_{n,m}.
\end{multline*}
    \else
\begin{align*}
-h_{n,m}b_1-\sum_{u+1+v=n+m}(1^{\tens u}\tens b_1\tens 1^{\tens v})h_{n,m}
&=(\wt h B_1-\nu)_{n,m},
\\
-k_{n,m}b_1+\sum_{u+1+v=n+m}(1^{\tens u}\tens b_1\tens 1^{\tens v})k_{n,m}
+h_{n,m}f_1
&=(\wt kB_1+\iota-\wt hf)_{n,m}.
\end{align*}
    \fi
Thus, if we introduce double coderivations $\overline h$ and $\overline
k$ by their components: $\overline{h}_{p,q}=h_{p,q}$,
$\overline{k}_{p,q}=k_{p,q}$ for $p+q\le t$ (using just found maps if
$p+q=t$) and $0$ otherwise, then these coderivations satisfy
equations~\eqref{eq-h} and \eqref{eq-k} for each \(p,q\) such that
$p+q\le t$. Induction on $t$ proves the proposition.
\end{proof}

\begin{theorem}\label{thm-unital-structure}
Every unital \ainf-category admits a weak unit.
\end{theorem}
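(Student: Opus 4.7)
The plan is to deduce the theorem by chaining together the two preceding propositions, which were tailored precisely for this moment; no further argument beyond their combination is required. First I would apply \propref{prop-existence-double-coder} to the given unital \ainf-category $\ca$, producing a double $(1,1)$\n-coderivation $h:Ts\ca\tens Ts\ca\to Ts\ca$ of degree $-1$ satisfying $hB_1=\nu$. Then I would feed $h$ into the implication (b)$\Rightarrow$(a) of \propref{prop-existence-unital-structure}, which manufactures from $h$ an \ainf-functor $U:\ca^\su\to\ca$ with $\bigl[\ca\xhookrightarrow{e}\ca^\su\rto{U}\ca\bigr]=\id_\ca$. By the Kontsevich--Soibelman definition, such a $U$ is exactly a weak unit of $\ca$, which is what had to be produced.

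To spell out the bookkeeping: \propref{prop-existence-unital-structure} has already identified weak units with morphisms of augmented differential graded cocategories $\ce\to Ts\ca$ whose component $\phi_0$ equals $\id_\ca$, via the isomorphism $\zeta$ of \secref{sec-proof-unital-implies-unital-structure}. The recipe $\phi_0=\id_\ca$, $\phi_n=(\phi_{n-1}\tens 1)\phi$ of the (b)$\Rightarrow$(a) argument then uses only the coderivation identity and the relation $\phi B_1=\nu$ to verify equations \eqref{equ-compat-with-Delta}, \eqref{equ-compat-with-b}, and \eqref{equ-compat-with-epsilon}, so no additional input from $\ca$ is needed beyond $h$ itself.

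The genuine content — and the step I would flag as the main obstacle — sits inside \propref{prop-existence-double-coder} rather than in the theorem proper. That construction is where unitality of $\ca$ is used in an essential way: one invokes the Yoneda-type result \cite[Corollary~A.12]{math.CT/0306018} to replace $\ca$ by a quasi-isomorphic differential graded category $\cd$, transports the strictly unital double coderivation $\xi=(1\tens\uni^\cd_0\tens 1)\mu^{(3)}$ (supplied by \lemref{lem-xi-double-coder}) back along an \ainf-equivalence $f:\ca\to\cd$, and then inductively solves the pair of equations $hB_1=\nu$, $hf=\iota+kB_1$ component by component using the acyclicity of the mapping cone of $\Phi=(-\cdot f_1)$. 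Once that machinery is in place, the present theorem is a one-line corollary: apply \propref{prop-existence-double-coder}, then \propref{prop-existence-unital-structure}.
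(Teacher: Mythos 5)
Your proposal is correct and coincides with the paper's own proof, which likewise derives the theorem in one step by combining \propref{prop-existence-double-coder} (producing $h$ with $hB_1=\nu$ from unitality of $\ca$) with the implication (b)$\Rightarrow$(a) of \propref{prop-existence-unital-structure}. Your additional remarks about where the real work sits and how the bijection via $\zeta$ underlies the second proposition are accurate glosses on the paper's argument, not a deviation from it.
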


\begin{proof}
The proof follows from Propositions \ref{prop-existence-unital-structure} and
\ref{prop-existence-double-coder}.
\end{proof}

    \ifx\chooseClass1
\section{Summary}
We have proved that the definitions of unital \ainf-category given by
the first author, by Kontsevich and Soibelman, and by Fukaya are
equivalent.
    \fi

\bibliographystyle{amsplain}

\end{document}